\numberwithin{equation}{section}
\DeclareSymbolFont{cyrletters}{OT2}{wncyr}{m}{n}\DeclareMathSymbol{\Sha}{\mathalpha}{cyrletters}{"58}
\def\legendre(#1,#2){%
\begin{pmatrix}
#1\cr 

#2\cr
\end{pmatrix}} 
\newtheorem{theorem}{Theorem}
\newtheorem{Theorem}{Theorem}[section]
\newtheorem{Corollary}[Theorem]{Corollary}
\newtheorem{Lemma}[Theorem]{Lemma}
\newtheorem{Proposition}[Theorem]{Proposition}
\theoremstyle{definition}
\theoremstyle{remark}
\newtheorem*{remark}{Remark}
\title{Quasimodular moonshine and arithmetic connections}
\begin{document}

\author[L. Beneish]{Lea Beneish}
\address{Department of Mathematics, Emory University}
\email{lea.beneish@emory.edu}

\begin{abstract} We prove the existence of a module for the largest Mathieu group, whose trace functions are weight two quasimodular forms. Restricting to the subgroup fixing a point, we see that the integrality of these functions is equivalent to certain divisibility conditions on the number of $\mathbb{F}_p$ points on Jacobians of modular curves. Extending such expressions to arbitrary primes, we find trace functions for modules of cyclic groups of prime order with similar connections. Moreover, for cyclic groups, we give an explicit vertex operator algebra construction whose trace functions are given only in terms of weight two Eisenstein series.
\end{abstract}
\maketitle
\section{Introduction}
The idea of moonshine dates back to the $1970$s when McKay noticed that $196884$, the coefficient of $q$ in the expansion of the normalized $j$-invariant, $J(\tau)$, is the sum of $1$ and $196883$, which are dimensions of irreducible representations of the monster group $\mathbb{M}$.  Thompson further conjectured that this is true for the rest of the coefficients of $J(\tau)$ --- that there exists a graded infinite-dimensional $\mathbb{M}$-module \[ V^\natural = \bigoplus_n V_n^\natural \] such that $\sum\limits_{n=-1}^\infty \dim(V_n^\natural)q^n =J(\tau).$ 

More generally, Thompson conjectured that there exists a module $V^\natural$ such that the graded trace functions of other group elements $g\in \mathbb{M}$, defined as $T_g(\tau):=\sum_{n=-1}^\infty \text{tr}(g\mid V_n^\natural)q^n$, are distinguished functions \cite{Thompson, MR554402}. In $1979$, Conway and Norton conjectured further (the Monstrous Moonshine Conjecture) that for each $g\in \mathbb{M}$ the graded trace function $T_g$ is the unique modular function that generates the genus zero function field arising from a specific subgroup $\Gamma_g$ of $SL_2({\mathbb{R}})$, normalized such that $T_g(\tau)=q^{-1}+O(q)$ \cite{ConwayNorton}. In $1988$, Frenkel, Lepowsky, and Meurman \cite{FLM} constructed $V^\natural$ and in $1992$, Borcherds proved the Monstrous Moonshine Conjecture, showing that $V^\natural$ is the moonshine module with properties conjectured by Conway and Norton \cite{Borcherds}.

Almost twenty years after the proof of the Monstrous Moonshine Conjecture, other sporadic simple groups appeared in the theory of moonshine. 
One of these groups, $M_{24}$, entered the story in $2010$ when physicists Eguchi, Ooguri, and Tachikawa \cite{EOT} noticed that the dimensions of representations of $M_{24}$ are the multiplicities (the first few coefficients of a function denoted $\Sigma(\tau)$) of superconformal algebra characters in the K$3$ elliptic genus. This led to the conjecture of the existence of an infinite-dimensional graded $M_{24}$-module whose existence was proven in $2012$ by Gannon \cite{Gannon}. More precisely, Gannon showed that there exists a graded $M_{24}$-module \[K=\bigoplus\limits_{n} K_{n-1/8}\] whose graded dimension function, $H_e(\tau):=-2q^{-1/8}+\sum\limits_{n=1}^{\infty} \text{tr}(e\mid K_{n-1/8})$  is equal to $\Sigma(\tau)$ from the expansion of the K$3$ elliptic genus in \cite{EOT}. The graded trace functions  for other $g\in M_{24}$, defined analagously and denoted $H_g(\tau)$, are mock modular forms of weight $1/2$ and level $|g|$, the order of $g$. This is in contrast to monstrous moonshine, whose trace functions are weakly holomorphic modular functions.

This $M_{24}$ moonshine belongs to a larger class of $23$ moonshines, umbral moonshine, which was conjectured by Cheng, Duncan, and Harvey \cite{CDH} and whose existence was proven by Duncan, Griffin, and Ono \cite{DGO}. But there are some disparities between umbral moonshine and monstrous moonshine. In addition to the fact, already mentioned, that umbral moonshine involves (mock modular) forms that are not of integral weight, another difference compared to the monstrous case is that despite some recent progress by Anagiannis--Cheng--Harrison, Cheng--Duncan, Duncan--Harvey, and Duncan--O'Desky (see \cite{DH, DO, CDMeroJacobi,AnagiannisChengHarrison}), the umbral moonshine theory does not yet include module constructions in all of its cases.

An initial motivation for this work was to bring umbral moonshine in a closer context to monstrous moonshine. In carrying this out we discovered two new infinite families of similar phenomena: one with arithmetic content, and one for which we are able to give vertex algebra constructions.

 We start with Mathieu moonshine and reframe $M_{24}$ moonshine in terms of trace functions that are weight two quasimodular forms instead of mock modular forms. Restricting to $M_{23}$ we find expressions for these forms that contain arithmetic information. We generalize this type of expression to $\mathbb{Z}/ N \mathbb{Z}$ for arbitrary $N$ prime, from which we can observe more connections of a similar kind.  At the expense of arithmetic connections, we give a second set of quasimodular trace functions for another $\mathbb{Z}/ N \mathbb{Z}$-module which are given only in terms of Eisenstein series. We construct these modules explicitly as tensor products of Heisenberg and Clifford module vertex operator algebras.

The quasimodular forms that we give as trace functions of an $M_{24}$-module come from multiplying the functions $\hat{H}_g(\tau)$, the completions of the mock modular forms $H_g(\tau)$, by $\eta^3(\tau)$ to bring the weight to $2$, and then taking the holomorphic projection. In this way we define weight two quasimodular forms $Q_g(\tau)$ for every $g\in M_{24}$. The first example of a weight two quasimodular form is \[E_2(\tau)=1-24\sum\limits_{n=1}^{\infty} \sigma_1(n) q^n,\] the usual weight $2$ Eisenstein series. In our setting, each coefficient of $-2E_2(\tau)$ has a natural interpretation as the dimension of a virtual $M_{24}$-module. This is because our $Q_e(\tau)$ is equal to $-2E_2(\tau)$ and we prove the following:

 \begin{theorem}
 There exists a virtual graded $M_{24}$-module $V=\bigoplus\limits_{n}V_n$ such that \[Q_g(\tau)=\sum\limits_{n=0}^{\infty}\emph{tr}(g\mid V_n)q^n.\] 
\end{theorem}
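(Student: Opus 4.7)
The plan is to construct $V$ coefficient by coefficient, by showing that for each $n \ge 0$ the class function $\varphi_n(g) := [q^n]\, Q_g(\tau)$ is a virtual character of $M_{24}$. Granted this, $V_n$ is the corresponding virtual representation and $V = \bigoplus_n V_n$ has the required graded trace functions.

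Two properties must be verified. First, $\varphi_n$ is a class function: Gannon's work establishes $H_g = H_h$ whenever $g$ is conjugate to $h$, and the passage $H_g \mapsto Q_g$ via modular completion, multiplication by the $g$-independent form $\eta^3$, and holomorphic projection is linear in $H_g$, so the class-function property transfers to $Q_g$. Second, the multiplicity
\[ m_n^{\chi_i} \;=\; \frac{1}{|M_{24}|} \sum_{g \in M_{24}} \overline{\chi_i(g)}\, \varphi_n(g) \]
of each of the $26$ irreducible characters $\chi_i$ of $M_{24}$ in $\varphi_n$ must be an integer; equivalently, the generating series $F_i(\tau) := \sum_n m_n^{\chi_i}\, q^n$ must have integer Fourier coefficients.

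The strategy for integrality is to bootstrap from Gannon's theorem. Since $\eta^3$ is $g$-independent, the product $\eta^3(\tau)\, H_g(\tau)$ already has coefficients that form integer virtual $M_{24}$-characters at every level. The discrepancy $Q_g - \eta^3 H_g$ arises from applying holomorphic projection to the non-holomorphic part of $\eta^3 \hat{H}_g$, which is an explicit, term-by-term functional of the shadow of $H_g$. Because those shadows are themselves class functions whose coefficients give integer virtual $M_{24}$-characters (part of the umbral moonshine data of Cheng--Duncan--Harvey and Duncan--Griffin--Ono), this correction also yields a virtual-character generating series, and summing produces the desired integer decomposition of $\varphi_n$. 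The main obstacle is making the weight-two holomorphic projection formula sufficiently explicit: at weight two the projection produces a genuinely quasimodular (not modular) form, and the $E_2$-type boundary contribution must be tracked with integer control. As a concrete fallback, each $F_i$ lies in a fixed finite-dimensional space of weight-two quasimodular forms on some $\Gamma_0(N)$, so a Sturm-bound argument reduces integrality for all $n$ to a finite verification using the character table of $M_{24}$ and the tabulated expansions of the $H_g$.
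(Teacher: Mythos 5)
Your proposal is correct and follows essentially the same route as the paper: decompose $Q_g$ as $H_g\eta^3$ (whose multiplicity generating functions are integral by Gannon's theorem, unaffected by multiplication by the $g$-independent series $\eta^3$) plus the holomorphic-projection correction $-2\chi(g)F_2(\tau)$, whose contribution to the multiplicity of $\chi_i$ is $-2F_2(\tau)\langle\chi,\chi_i\rangle$ and hence integral because $\chi$ is the character of the $24$-dimensional permutation representation. The "obstacle" you flag about making the weight-two holomorphic projection explicit is already resolved by the paper's definition $Q_g:=H_g\eta^3-2\chi(g)F_2$ (justified via Mertens' theorem), so neither the shadow bookkeeping nor the Sturm-bound fallback is needed.
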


Interestingly, if one restricts to the trace functions of $M_{23}$, a subgroup of $M_{24}$ whose group elements have fixed points in their permutation representations, the $Q_g(\tau)$ have convenient expressions in terms of Eisenstein series and cusp forms. These expressions are of the form 
\begin{equation}\label{eiscusp} Q_g^{}(\tau)=-2E_{2,N}(\tau)+\frac{N}{n_{N}}G_N(\tau)\end{equation}
where we use $N$ to denote the order of $g$, we let $n_N:= \text{num}\left(\dfrac{N-1}{12}\right)$, $E_{2,N}(\tau)$ is an expression in terms of Eisenstein series (cf. \eqref{e2n}), and $G_N$ is a specific cusp form of weight $2$ for $\Gamma_0(N)$ with integer coefficients.

The trace functions of this module involve weight two cusp forms and so they contain arithmetic information. As an example, for certain $N$, the Jacobian of $X_0(N)$, denoted $J_0(N)$, is an elliptic curve, and so the integrality of the trace functions is equivalent to certain divisibility conditions on the number of $\mathbb{F}_p$ points on these curves (this holds even when the dimension of $J_0(N)$ is greater than $1$, but we restrict to elliptic curves here for simplicity). These results depend on the cooperation of Eisenstein series and cusp forms to sum to integral coefficients. For example we have the following (known, see Appendix A of \cite{Katz}) divisibility conditions arising from $M_{23}$:
 
\begin{Corollary}\label{cor} 
\begin{enumerate} 
\item []
 \item  For $p\neq 11$, we have $5\mid \# J_0(11)(\mathbb{F}_p)$. 
 \item For $p\neq 2,7$ we have $3\mid \# J_0(14)(\mathbb{F}_p)$. 
 \item For $p\neq 3,5$, we have $4\mid \# J_0(15)(\mathbb{F}_p)$. 
 
\end{enumerate}
\end{Corollary}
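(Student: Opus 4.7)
The plan is to exploit the integrality of the $M_{23}$ trace functions. Theorem 1.1 produces a virtual $M_{24}$-module $V=\bigoplus V_n$ with $Q_g(\tau)=\sum_n \operatorname{tr}(g\mid V_n)q^n$; since $M_{24}$ is a rational group, each $\operatorname{tr}(g\mid V_n)$ is a rational integer, and this persists on restriction to $M_{23}$. Combining this with the decomposition \eqref{eiscusp} for $g\in M_{23}$ of order $N$, we obtain that the $q^p$-coefficient of $-2E_{2,N}(\tau)+\tfrac{N}{n_N}G_N(\tau)$ lies in $\mathbb{Z}$ for every prime $p$.

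For the three relevant values $N\in\{11,14,15\}$, the Jacobian $J_0(N)$ is an elliptic curve $E_N$, so by Eichler--Shimura $G_N$ is a rational multiple of the weight-two newform attached to $E_N$; its $p$-th Fourier coefficient is therefore $c_N\bigl(p+1-\#E_N(\mathbb{F}_p)\bigr)$ for $p\nmid N$, where $c_N\in\mathbb{Q}$ is the explicit ratio between $G_N$ and the normalized newform of $E_N$. On the Eisenstein side, the $q^p$-coefficient of $E_{2,N}(\tau)$ collapses to a single rational multiple of $\sigma_1(p)=p+1$ whenever $p\nmid N$, because each $E_2(d\tau)$ with $1<d\mid N$ contributes zero at $q^p$. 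The integrality constraint therefore reduces to a congruence of the shape $(p+1)\equiv \#E_N(\mathbb{F}_p)\pmod{\ell_N}$, where $\ell_N$ is the integer extracted from $N$, $n_N$, and $c_N$; the expectation is $\ell_{11}=5$, $\ell_{14}=3$, and $\ell_{15}=4$, yielding exactly the three claimed divisibilities (the excluded primes are precisely those of bad reduction for $E_N$, where the Eichler--Shimura formula does not apply directly).

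The main obstacle is identifying the correct rational normalization $c_N$ of $G_N$ together with the precise Eisenstein combination defining $E_{2,N}$, particularly in the composite cases $N=14,15$ where $\Gamma_0(N)$ carries several independent weight-two Eisenstein series and more than one newform orbit. Once these normalizations are pinned down --- which can be done by matching finitely many low-order $q$-coefficients of \eqref{eiscusp} against the $M_{23}$-character data of the module $V$ furnished by Theorem 1.1, and then invoking the uniqueness of a weight-two form on $\Gamma_0(N)$ with a given truncated $q$-expansion --- each part of the corollary reduces to a single mod-$\ell_N$ check performed on the coefficient identity and the Eichler--Shimura relation $a_p(G_N)=c_N\bigl(p+1-\#E_N(\mathbb{F}_p)\bigr)$.
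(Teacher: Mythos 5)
Your overall strategy is the same as the paper's: establish that $-2E_{2,N}(\tau)+\frac{N}{n_N}G_N(\tau)$ has integer coefficients, observe that for $p\nmid N$ its $q^p$-coefficient is an explicit rational combination of $\sigma_1(p)=p+1$ and the newform coefficient, and convert the resulting congruence into the divisibility statement via $c_N(p)=p+1-\#J_0(N)(\mathbb{F}_p)$. Two steps, however, need repair. First, your justification of integrality is wrong: $M_{24}$ is \emph{not} a rational group (the classes of orders $7$, $14$, $15$, $21$, $23$ have irrational character values), so ``virtual module over a rational group'' does not deliver integer traces. The paper gets integrality of $Q_g$ directly from Gannon's integrality of the $H_g$ together with that of $\eta^3$ and $F_2$ (this is the first step of the proof of the module theorem), and then identifies $Q_g$ with $-2E_{2,N}+\frac{N}{n_N}G_N$ by a case-by-case check against formula $(B.24)$ of \cite{DGO}; you should cite that integrality rather than rationality of the group.

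Second, the congruence you extract is mis-stated. You write $(p+1)\equiv \#E_N(\mathbb{F}_p)\pmod{\ell_N}$, which says $a_p\equiv 0\pmod{\ell_N}$; this is false (for $N=11$, $p=2$ one has $p+1=3$ while $\#J_0(11)(\mathbb{F}_2)=5$, and $3\not\equiv 0\pmod 5$) and in any case does not imply the corollary. The computation actually yields $a_p\equiv p+1\pmod{\ell_N}$: for instance at $N=11$ the integrality of $-\frac{2}{5}\sigma_1(p)+\frac{22}{5}c_{11}(p)$ gives $2c_{11}(p)\equiv 2(p+1)\pmod 5$, hence $c_{11}(p)\equiv p+1\pmod 5$, which is precisely $\#J_0(11)(\mathbb{F}_p)\equiv 0\pmod 5$. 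Finally, the ``main obstacle'' you flag --- pinning down the normalization $c_N$ of $G_N$ --- is dispatched in the paper simply by exhibiting the $G_N$ as explicit eta products in the appendix (e.g.\ $G_{11}=2\eta^2(\tau)\eta^2(11\tau)$, so $c_{11}=2$); your proposed matching of truncated $q$-expansions against character data would work but is unnecessary.
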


Moreover, the pattern we observe in the denominators of the cusp forms reflects a result of Mazur on a congruence between Eisenstein series and cusp forms in the cases where $N$ is prime \cite{Mazur}. The type of expression in (\ref{eiscusp}) can be generalized, and in fact, the formula for $Q_g^{}(\tau)$ does not depend on $M_{23}$ and can be defined for arbitrary $N$. We restrict our focus to $N$ prime and prove the existence of a $\mathbb{Z}/ N \mathbb{Z}$-module with trace functions: 
 
\begin{equation}\label{znztrace}f_{g}^{(N)}(\tau):=\begin{dcases}
   -\frac{\ell_N}{n_N}E_{2}(\tau) & \text{if } g=e, \\
    -\frac{\ell_N}{n_N}E_{2,N}(\tau)-\frac{N}{n_N}G_N(\tau) &\text{if } g\neq e,
\end{dcases} \end{equation}
where $\ell_N:=\text{num}\left(\dfrac{N^2-1}{24}\right)$. The functions $f_{g}^{(N)}(\tau)$ are quasimodular forms of weight $2$ with integral coefficients defined in terms of Eisenstein series and cusp forms. The $E_{2,N}(\tau)$ (cf. \eqref{e2nprime}) are again defined in terms of Eisenstein series and the $G_N(\tau)$ are certain cusp forms of level $N$ and weight $2$ with integer coefficients (cf. Section \ref{Section3}, \Cref{eiscuspcong}). With these definitions we state the following result:

\begin{theorem}\label{thm2} Let $N$ be a prime and $f_g^{(N)}(\tau)$ be as in (\ref{znztrace}). Then there exists a virtual graded $\mathbb{Z}/N\mathbb{Z}$-module $V^{(N)}=\bigoplus\limits_{n}V^{(N)}_n$ such that \[f_{g}^{(N)}(\tau)=\sum\limits_{n=0}^{\infty}\emph{tr}(g \mid V^{(N)}_n)q^n.\] 
\end{theorem}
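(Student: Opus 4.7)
The plan is to convert the existence of the virtual module into a Fourier-coefficient congruence via character theory for $\mathbb{Z}/N\mathbb{Z}$, and then establish that congruence using the Eisenstein--cusp form congruence underlying the definition of $G_N$ (\Cref{eiscuspcong}).

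I would start with the character-theoretic reduction. Let $g$ generate $\mathbb{Z}/N\mathbb{Z}$ with irreducible characters $\chi_k(g^j)=\zeta_N^{jk}$ for $k=0,\ldots,N-1$. A virtual graded module is specified by integer multiplicities $m_{n,k}\in\mathbb{Z}$ of $\chi_k$ in $V_n^{(N)}$. Since (\ref{znztrace}) assigns every non-identity element the common value $f_g^{(N)}$, writing $c_n:=[q^n]f_e^{(N)}$ and $a_n:=[q^n]f_g^{(N)}$, character orthogonality forces
\begin{equation*}
m_{n,0}=\frac{c_n+(N-1)a_n}{N},\qquad m_{n,k}=\frac{c_n-a_n}{N}\ (k\neq 0).
\end{equation*}
Both requirements collapse to the single coefficientwise congruence $f_e^{(N)}(\tau)\equiv f_g^{(N)}(\tau)\pmod{N}$.

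Next I would establish that congruence. Substituting (\ref{znztrace}) rewrites it as
\begin{equation*}
\frac{\ell_N}{n_N}\bigl(E_{2,N}(\tau)-E_2(\tau)\bigr)+\frac{N}{n_N}G_N(\tau)\equiv 0\pmod{N}.
\end{equation*}
The standard expression of $E_{2,N}$ in terms of $E_2(\tau)$ and $E_2(N\tau)$ shows that every non-constant Fourier coefficient of $E_{2,N}-E_2$ lies in $N\mathbb{Z}$, while the constant terms agree (or differ by an $N$-multiple) depending on normalization. Meanwhile the congruence recorded in \Cref{eiscuspcong} identifies $G_N$ as a cusp form congruent (up to an explicit integer scalar) to $E_{2,N}$ modulo $n_N$; this is exactly the input that makes $f_g^{(N)}$ integral in the first place, and a direct substitution shows that, after clearing the factor $n_N$, the remainder lies in $N\mathbb{Z}[[q]]$.

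The main obstacle is careful bookkeeping of the denominator $n_N$ and the rational factor $\ell_N/n_N=(N+1)\gcd(N-1,12)/24$ against the mod-$N$ reduction, which depends on the residue class of $N\bmod 12$. The cleanest route is a short case split on $\gcd(N-1,12)\in\{2,4,6,12\}$ (with $N=2,3$ handled separately), in each case reducing the previous step to an algebraic identity among $\ell_N$, $n_N$, $N$, and the leading Hecke data of $G_N$. Given that \Cref{eiscuspcong} is precisely Mazur's congruence at prime level, the verification should reduce to a short computation per case, after which Step~1 produces the desired $\mathbb{Z}/N\mathbb{Z}$-module $V^{(N)}$ with the multiplicities displayed above.
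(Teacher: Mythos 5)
Your proposal follows essentially the same route as the paper: the character-orthogonality reduction to the single coefficientwise congruence $f_e^{(N)}\equiv f_g^{(N)}\pmod N$ is exactly the paper's proof of the theorem, and that congruence is the paper's \Cref{congruencelem}, which is established in the same way you describe --- clear the denominator $n_N$ (coprime to $N$ for $N>3$), invoke the mod-$n_N$ Eisenstein--cusp-form congruence that defines $G_N$, and treat $N=2,3$ separately. One small correction: the non-constant coefficients of $E_{2,N}(\tau)-E_2(\tau)$ equal $\tfrac{N^2}{\ell_N}\left(\sigma_1(n)-\sigma_1(n/N)\right)$ and need not lie in $N\mathbb{Z}$ or even in $\mathbb{Z}$ (e.g.\ $N=7$ gives $49/2$ for $n=1$); the statement your argument actually needs, and which does hold, is that $\ell_N\left(E_{2,N}(\tau)-E_2(\tau)\right)$ has all non-constant coefficients in $N^2\mathbb{Z}$.
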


As a consequence, for $N$ prime, one can observe many more examples analogous to those in Corollary \ref{cor} arising from the trace functions $f^{(N)}_g(\tau)$.

In the trace function for $\mathbb{Z}/ N \mathbb{Z}$ in Theorem \ref{thm2}, the multiple in front of $E_{2,N}(\tau)$ is $\dfrac{\ell_N}{n_N}$ (recall that $\ell_N=\text{num}\left(\frac{N^2-1}{24}\right)$ and $n_N=\text{num}\left(\frac{N-1}{12}\right)$). We have that $\ell_N$ is the minimal number which clears the denominators of $E_{2,N}(\tau)$, and we can further divide by $n_N$ because we find a cusp form that satisfies a congruence modulo $n_N.$ If we restrict our functions to be only in terms of Eisenstein series and do not use cusp forms, we can no longer divide by $n_N$ and instead have trace functions as follows:
\noindent for $N$ prime, let
\begin{equation}\label{Feq}F_{g}^{(N)}(\tau):=\begin{dcases}
   -\ell_NE_{2}(\tau) & \text{if } g=e, \\
 -\ell_NE_{2,N}(\tau) &\text{if } g\neq e.
\end{dcases} \end{equation}
These are weight two purely Eisenstein quasimodular trace functions for a $\mathbb{Z}/ N \mathbb{Z}$-module. Although this comes at the cost of arithmetic connections arising from cusp forms, we give a vertex operator algebra construction for this module. For each $N$, this module is $W^{(N)}_{\text{tw}}$, a twisted module for the vertex operator algebra which we denote $W^{(N)}$. The vertex operator algebras $W^{(N)}$ are defined as tensor products of Heisenberg and Clifford module vertex algebras. The precise construction of $W^{(N)}$ is given in Section \ref{Section5}.

\begin{theorem} For $N$ prime, the canonically twisted module $W_{\emph{tw}}^{(N)}=\bigoplus\limits_{n}W^{(N)}_{\emph{tw},n}$ of the vertex operator algebra $W^{(N)}$ is an infinite dimensional virtual graded module for $\mathbb{Z}/ N \mathbb{Z}$  such that \[F_{g}^{(N)}(\tau)=\sum\limits_{n=0}^{\infty}\emph{tr}(g\mid W^{(N)}_{\emph{tw},n})q^n.\] 
\end{theorem}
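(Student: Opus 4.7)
The plan is to verify the two trace identities directly from the explicit construction of $W^{(N)}$ given in Section \ref{Section5}. Since $W^{(N)}$ is built as a tensor product of Heisenberg and Clifford module vertex operator algebras, the graded trace of any automorphism on the canonically twisted module $W_{\text{tw}}^{(N)}$ factors as a product of traces on the individual tensor factors, and each such factor admits a standard character formula as an explicit $q$-product. The whole strategy then reduces to matching these product expansions with the Eisenstein expressions in (\ref{Feq}).

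First I would handle the identity element. The graded dimension of $W_{\text{tw}}^{(N)}$ is, by construction, an $\eta$-quotient with prescribed integer exponents determined by the ranks and twist data of the Heisenberg and Clifford factors. To identify it with $-\ell_N E_2(\tau)$ I would invoke the fundamental logarithmic derivative identity
\[
q\frac{d}{dq} \log \eta(\tau) = -\frac{E_2(\tau)}{24},
\]
so that logarithmic $q$-derivatives of $\eta$-quotients produce weight two Eisenstein series. The multiplicities in the construction will have been set up so that the resulting combination has coefficient $-\ell_N$, with $\ell_N = \text{num}((N^2-1)/24)$ emerging from the central-charge normalization.

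Next, for a nontrivial $g \in \mathbb{Z}/N\mathbb{Z}$, the automorphism acts on the Heisenberg and Clifford generators by powers of $\zeta := e^{2\pi i / N}$, so the twisted trace factors into products of the form
\[
\prod_{n \geq 1}\bigl(1 - \zeta^j q^n\bigr)^{\pm 1},
\]
with one factor per generator and with the sign determined by bosonic versus fermionic statistics. Applying the same logarithmic derivative identity and cyclotomically averaging the $\zeta^j$-dependence reduces these products to linear combinations of $E_2(\tau)$ and $E_2(N\tau)$. For $N$ prime, $E_{2,N}(\tau)$ (cf. \eqref{e2nprime}) is (up to scaling) proportional to $N E_2(N\tau) - E_2(\tau)$, and this identifies the twisted trace with $-\ell_N E_{2,N}(\tau)$, as required by (\ref{Feq}).

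The hard part will be keeping the numerical normalizations consistent across the identity and twisted cases uniformly for every prime $N$. Concretely, the multiplicities of the Heisenberg and Clifford tensor factors in Section \ref{Section5} must be chosen so that: (i) the constant term of the untwisted graded dimension equals $-\ell_N$, which requires some Clifford factors to contribute with fermionic (supertrace) signs and hence yields the \emph{virtual} rather than a genuine module structure; (ii) the factor $\text{num}((N^2-1)/24)$ appears uniformly from the eta-quotient character; and (iii) the same global multiplicity data produce the correct match with $E_{2,N}$ in the twisted case. Once these numerical data are pinned down from Section \ref{Section5}, the remaining verification becomes a careful but essentially routine manipulation of $\eta$-quotient and cyclotomic identities, requiring no deeper modular input beyond $q\partial_q\log\eta = -E_2/24$.
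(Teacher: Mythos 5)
There is a genuine gap, and it sits at the very first step of your plan. A plain graded trace of $g$ on a tensor product of Heisenberg and Clifford module vertex algebras is an $\eta$-quotient (a $q$-\emph{product}), and no $\eta$-quotient equals $-\ell_N E_2(\tau)$ or $-\ell_N E_{2,N}(\tau)$: these are weight-two quasimodular forms whose coefficients involve $\sigma_1(n)$, not the multiplicative coefficient structure of an eta-product. Your proposal repeatedly conflates the character of the module with its \emph{logarithmic derivative}; you correctly cite $q\frac{d}{dq}\log\eta=-E_2/24$, but you never explain how a logarithmic derivative arises as a trace on $W^{(N)}_{\text{tw}}$. So "matching these product expansions with the Eisenstein expressions in \eqref{Feq}" cannot succeed as stated, and the same objection applies to the twisted case: $\prod_{n}(1-\zeta^j q^n)^{\pm1}$ is not a linear combination of $E_2(\tau)$ and $E_2(N\tau)$ under any amount of cyclotomic averaging.

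The missing device, which is the actual content of Section \ref{Section5}, is twofold. First, one inserts the shifted Virasoro zero mode $\left(L_1(0)-\frac{c_1}{24}\right)$ of the rank-$(N^2-1)$ Heisenberg factor $V$ into the trace; this converts the character $f_1$ of that single factor into $D(f_1)$ (Lemma \ref{tr2}). Second, the remaining tensor factors $\widetilde{A}(\mathfrak{p})_{\text{tw}}$ and $U$ are chosen precisely so that their (inserted, twisted) traces supply $\eta^{N^2}(\tau)\cdot\eta^{-1}(\tau)=1/f_1$ in the untwisted case and $\eta^N(N\tau)\cdot\eta^{-1}(\tau)$ in the $\gamma\sigma$-twisted case (Lemmas \ref{clifflem} and \ref{heis2lem}), i.e.\ the reciprocal of the first factor's character \emph{equivariantly} with respect to the group action. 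Only then does the total trace become $D(f_1)/f_1=D\log f_1$, which is the Eisenstein expression via the three-factor decomposition \eqref{thateq1}. The operator $\widetilde{p}(0)$, with eigenvalues $\pm1$, is also what produces the virtual (signed) structure, not a choice of "fermionic supertrace signs" tuned to hit the constant term $-\ell_N$. Without the $L_1(0)$ insertion and the reciprocal-supplying factors, your computation terminates at an eta-quotient and the theorem cannot be reached.
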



This paper is organized as follows. Section \ref{Section2} includes information on the composition of the quasimodular forms we associate to $M_{24}$ including a proof that they are quasimodular and a proof of the existence of the graded module for which they are the trace functions. Section \ref{Section3} gives an alternative expression for the quasimodular forms for the cases $g\in M_{23}$, generalizes that expression to $\mathbb{Z}/ N \mathbb{Z}$ for arbitrary $N$ prime, and includes the proof of the existence of the corresponding $\mathbb{Z}/ N \mathbb{Z}$ module. Section \ref{Section3} additionally includes an elementary proof of Mazur's congruence between Eisenstein series and cusp forms. Section \ref{Section4} explains the arithmetic connections of the trace functions for the modules in Section \ref{Section3}. Section \ref{Section5} includes purely Eisenstein quasimodular trace functions for $\mathbb{Z}/ N \mathbb{Z}$ and constructs the corresponding modules explicitly.
 
\subsection*{Acknowledgments} The author is grateful to John Duncan for suggesting the topic and for his valuable advice and guidance throughout the process. The author would like to thank Victor Manuel Aricheta, Jeffrey Lagarias, Kimball Martin, Michael Mertens, Jackson Morrow, Ken Ono, Hanson Smith, and David Zureick-Brown for helpful comments and discussions. The author is also very grateful to Jeremy Rouse for spotting an error in an earlier version.

\section{Quasimodular $M_{24}$ forms}\label{Section2}
The setup of the $M_{24}$ functions starts with building forms of weight $2$ from the original Mathieu moonshine functions. More specifically, the forms come from multiplying the completions $\hat{H}_g(\tau)$ of the mock modular forms $H_g(\tau)$ from Mathieu moonshine by $\eta^3(\tau)$. Note that $\hat{H}_g(\tau)\eta^3(\tau)$ is a non holomorphic modular form of weight $2$, and since it does not have singularities at cusps, we can apply holomorphic projection to extract something holomorphic. In weight $2$, holomorphic projection results in a quasimodular form. For more information about mock modular forms, their completions, and holomorphic projection, we refer the reader to \cite{Kensbook}.
 
To give the expression for the holomorphic projection explicitly, we first define a function $F^{}_2(\tau)$ as follows:
 \[F^{}_2(\tau):=\sum\limits_{\substack{r>s>0\\ r-s \text{ odd}}}sq^{rs/2}.\] The holomorphic projections of the $\hat{H}_g(\tau)\eta^3(\tau)$ are given in terms of $H_g(\tau)\eta^3(\tau)$ and some multiple of $F_2(\tau)$ for each $g\in M_{24}$. We will see that these expressions are quasimodular forms. 
 For the first case, Dabholkar, Murthy, and Zagier give a formula in \cite{DMZ} that (when rearranged) says
 \begin{equation}\label{dmz} H_e(\tau)\eta^3(\tau)-48F_2(\tau)=-2E_2(\tau).\end{equation}
 \begin{remark} Dabholkar, Murthy, and Zagier also define a higher weight analogue of $F_2^{(k)}$ for $k\geq 2$. For our purposes $k=2$.\end{remark}

A theorem of Mertens, when specialized to these functions, gives explicitly that the holomorphic projection of $\hat{H}_e(\tau)\eta^3(\tau)$ is equal to the left hand side of (\ref{dmz}) and is a quasimodular form \cite{Michael}. Mertens' theorem can be applied to the other functions $H_g(\tau)$, and 
in fact, we have such a formula more generally, for any $g\in M_{24}$. Let $\chi(g)$ be the number of fixed points of $g$ in the $24$-dimensional permutation representation of $M_{24}$. We define \[Q_g(\tau):=H_{g}(\tau)\eta^3(\tau)-2\chi(g)F^{}_2(\tau).\] This is consistent with the $g=e$ case given above because the number of fixed points in that case is $\chi(e)=24$. We will show that these functions are quasimodular. 

To be precise in the description of the $Q_g(\tau)$, we first define $\rho_g$, a function from $\Gamma_0(|g|)$ to $\mathbb{C}$ given by $\rho_g(\gamma):=\text{exp}\left(2\pi i\left(-\dfrac{cd}{|g|h}\right)\right)$, where $h$ is the minimal length among cycles in the cycle shape of $g$ and $c,d$ are the entries of the lower row of a matrix $\gamma$ in $\Gamma_0(|g|)$. 

\begin{Proposition}  
The $Q_g(\tau)$ are quasi-modular of weight $2$ on $\Gamma_0(|g|)$, with multiplier system $\rho_g$. 
\end{Proposition}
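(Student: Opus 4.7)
The plan is to realize $Q_g(\tau)$ as the holomorphic projection of the non-holomorphic weight $2$ form $\hat{H}_g(\tau)\eta^3(\tau)$ and invoke general principles: holomorphic projection preserves weight, level, and multiplier, and in weight $2$ it lands in quasimodular forms. Quasi-modularity with the claimed data will then follow mechanically once the modular covariance of the completed object is identified.

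First I would assemble the transformation law of $\hat{H}_g(\tau)\eta^3(\tau)$. The completion $\hat{H}_g$ is a harmonic Maass form of weight $1/2$ on $\Gamma_0(|g|)$, whose multiplier (computed e.g.\ in Cheng--Duncan or implicit in \cite{Gannon}) is a product of the standard theta multiplier and the character $\gamma\mapsto e^{2\pi i cd/(|g|h)}$. Multiplying by $\eta^3$, which has weight $3/2$ and the Dedekind multiplier, the theta-type factors cancel and one is left with a real-analytic function of weight $2$ on $\Gamma_0(|g|)$ transforming with precisely the character $\rho_g$. Next I would check that $\hat{H}_g(\tau)\eta^3(\tau)$ is of sufficiently rapid decay at every cusp for holomorphic projection to apply: the polar part $-2q^{-1/8}$ of $H_g$ is absorbed by $\eta^3\sim q^{1/8}$ at $\infty$, and an analogous cancellation occurs at the other cusps of $\Gamma_0(|g|)$, so the product is cuspidal (as a real-analytic form).

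The decisive step is then Mertens' theorem \cite{Michael}, which gives an explicit formula for the holomorphic projection of a product $\hat{f}\cdot\theta$ in terms of a Lambert-type series built from the Fourier coefficients of the shadow of $\hat{f}$ and of the unary theta $\theta$. Applied to $\hat{H}_g\cdot\eta^3$, the shadow of $\hat{H}_g$ is a constant multiple of $\chi(g)$ times a unary theta (this is the content of the Eguchi--Ooguri--Tachikawa/Gannon normalization, where $\chi(g)$ is the McKay--Thompson value on the $24$-dimensional permutation representation), and the resulting correction term reduces, after reindexing the Lambert series, to $2\chi(g)F_2(\tau)$. Thus
\[
\pi_{\mathrm{hol}}\bigl(\hat{H}_g(\tau)\eta^3(\tau)\bigr) \;=\; H_g(\tau)\eta^3(\tau)-2\chi(g)F_2(\tau)\;=\;Q_g(\tau),
\]
which gives the claim, since $\pi_{\mathrm{hol}}$ outputs weight $2$ quasimodular forms with the same group and multiplier as the input, and the $g=e$ identity \eqref{dmz} serves as a sanity check with $\chi(e)=24$.

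The main obstacle is bookkeeping: correctly combining the half-integral-weight multiplier of $\hat{H}_g$ with the Dedekind multiplier of $\eta^3$ so that the composite is exactly $\rho_g$ (rather than some inverse or conjugate), and matching the constants in Mertens' Lambert series with the coefficient $-2\chi(g)$ in front of $F_2$. Once both normalizations are pinned down from the definition of $H_g$, the proof reduces to applying general theorems from the theory of mock modular forms and holomorphic projection, as surveyed in \cite{Kensbook}.
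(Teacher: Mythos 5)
Your proposal is correct in outline, but it follows a genuinely different route from the paper's own proof. You prove the statement by identifying $Q_g$ with the (regularized) holomorphic projection of $\hat{H}_g\eta^3$ and then invoking Mertens' theorem to evaluate that projection as $H_g\eta^3-2\chi(g)F_2$; quasimodularity, the group, and the multiplier are inherited from the completed form. This is the route the paper's expository paragraphs gesture at (it explicitly cites Mertens for the $g=e$ case), but the proof the paper actually gives is more elementary: it uses the known explicit formula $H_g(\tau)=\frac{\chi(g)}{24}H_e(\tau)-T(g)/\eta^3(\tau)$, so that
\[
Q_g(\tau)=\frac{\chi(g)}{24}\bigl(H_e(\tau)\eta^3(\tau)-48F_2(\tau)\bigr)-T(g)=-\frac{\chi(g)}{12}E_2(\tau)-T(g),
\]
and then reads off everything from the single identity \eqref{dmz} (quasimodularity of $E_2$) together with the fact, cited from Appendix B.3.1 of \cite{DGO}, that the $T(g)$ are genuine weight $2$ forms on $\Gamma_0(|g|)$ with multiplier $\rho_g$. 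The paper's argument buys concreteness and avoids any analytic input: no growth conditions at cusps, no regularization of the weight $2$ projection, and the multiplier is pinned down by citation rather than by combining the half-integral-weight multiplier of $\hat{H}_g$ with the Dedekind multiplier of $\eta^3$. Your argument buys generality (it would apply verbatim to any harmonic completion with shadow proportional to $\eta^3$) at the cost of exactly the bookkeeping you flag: you would still need to verify that the composite multiplier is $\rho_g$ and not its conjugate, and that the constant in Mertens' Lambert-series correction matches $-2\chi(g)$; neither is carried out in your write-up, though both are checkable against the $g=e$ normalization. As written, your proof is a sound plan rather than a complete verification of the multiplier claim, which is the one piece of the statement your sketch defers.
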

\begin{proof} The following explicit formula 
\begin{equation}H_g(\tau)=\dfrac{\chi(g)}{24}H_e(\tau)- \dfrac{T(g)}{\eta^3(\tau)}. \end{equation}
was obtained in \cite{2,3,4,5} (see Section 3 of \cite{holographic}). When rearranged, the above formula relates $H_e(\tau)\eta^3(\tau)$ to each of the $H_g(\tau)\eta^3(\tau)$. The $T(g)$ are weight $2$ forms on $\Gamma_0(|g|)$ with multiplier $\rho_g$ and their explicit expressions are given in Appendix $B.3.1$ of \cite{DGO}. Combining these with the equation (\ref{dmz}) for $H_e(\tau)\eta^3(\tau)$ gives that the functions $H_g(\tau)\eta^3(\tau)$ are quasimodular of weight $2$.
\end{proof}

\noindent Now that we have described our new functions $Q_g(\tau)$ we show that there exists an $M_{24}$-module for which these are the graded trace functions.
 \begin{Theorem}
 There exists a virtual graded $M_{24}$-module $V=\bigoplus\limits_{n}V_n$ such that \[Q_g(\tau)=\sum\limits_{n=0}^{\infty}\emph{tr}(g\mid V_n)q^n.\]

\end{Theorem}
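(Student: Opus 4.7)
The plan is to realize $V$ as a formal difference of explicitly constructed virtual graded $M_{24}$-modules, using as the only nontrivial input Gannon's theorem \cite{Gannon}, which gives a virtual graded $M_{24}$-module $K=\bigoplus_n K_{n-1/8}$ with graded trace functions $H_g(\tau)$. Since $\eta^3(\tau)\in q^{1/8}\mathbb{Z}[[q]]$, I would view its Fourier expansion as the graded dimension of a virtual trivial $M_{24}$-module $M_{\eta^3}$ concentrated in gradings $\mathbb{Z}+\tfrac{1}{8}$. The tensor product $K\otimes M_{\eta^3}$ is then $\mathbb{Z}$-graded, because the $\pm\tfrac{1}{8}$ shifts cancel, and has graded trace function exactly $H_g(\tau)\eta^3(\tau)\in\mathbb{Z}[[q]]$.

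For the second term in the definition of $Q_g$, let $P$ denote the honest $24$-dimensional permutation representation of $M_{24}\subset S_{24}$, whose character is $\chi(g)$ by definition, and let $M_{F_2}$ be the virtual trivial graded $M_{24}$-module with graded dimension $F_2(\tau)\in q\mathbb{Z}[[q]]$. Then $2P\otimes M_{F_2}$ is a $\mathbb{Z}$-graded virtual $M_{24}$-module with graded trace function $2\chi(g)F_2(\tau)$. Taking $V$ to be the formal difference $(K\otimes M_{\eta^3})-(2P\otimes M_{F_2})$ in the Grothendieck group of virtual $\mathbb{Z}$-graded $M_{24}$-modules produces a virtual graded $M_{24}$-module whose graded trace is precisely $Q_g(\tau)=H_g(\tau)\eta^3(\tau)-2\chi(g)F_2(\tau)$.

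The only remaining check is that $V$ is supported in non-negative integer gradings with Fourier coefficients matching $\mathrm{tr}(g|V_n)$ for $n\geq 0$, as required by the statement. This is bookkeeping: $H_g(\tau)$ starts with $-2q^{-1/8}$ for every $g$, $\eta^3(\tau)$ starts with $q^{1/8}$, and $F_2(\tau)\in q\mathbb{Z}[[q]]$, so $Q_g(\tau)$ has constant term $-2$ and all other exponents are positive integers. There is no substantive obstacle to the existence statement beyond invoking Gannon's theorem; the genuine work of this section lies in the preceding proposition that recognizes $Q_g(\tau)$ as a weight-$2$ quasimodular form on $\Gamma_0(|g|)$, rather than in the module-theoretic assembly, which is purely formal.
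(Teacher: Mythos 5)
Your proposal is correct and rests on exactly the same two inputs as the paper's proof --- Gannon's theorem for the $H_g(\tau)\eta^3(\tau)$ part, and the fact that $\chi$ is the character of the honest $24$-dimensional permutation representation for the $2\chi(g)F_2(\tau)$ part --- so it is essentially the same argument. The only difference is presentational: the paper verifies integrality of the multiplicities via the identity $\frac{1}{|M_{24}|}\sum_{g}\chi(g)F_2(\tau)\overline{\chi_i(g)}=F_2(\tau)\langle\chi,\chi_i\rangle$ together with Gannon's integrality of the $H_g$-multiplicities, whereas you obtain the same integrality automatically by assembling $V$ as $(K\otimes M_{\eta^3})-(2P\otimes M_{F_2})$; these are the character-level and module-level phrasings of one and the same computation.
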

\begin{proof} First we show that the $Q_g(\tau)$ have integral coefficients. Gannon \cite{Gannon} shows that the functions $H_{g}(\tau)$ have integral coefficients. It is known that $\eta^3(\tau)$ has integral coefficients, and $F_2(\tau)$ also has integral coefficients. So we know that\[Q_g(\tau):=H_{g}(\tau)\eta^3(\tau)-2\chi(g)F^{}_2(\tau)\] 
 must have integral coefficients.

Next we show that the multiplicities $\text{m}_i(n)$ of the $M_{24}$ irreducible representations in the class functions defined by the coefficients of $Q_g(\tau)$ are integral. 

Gannon shows that the multiplicity generating function \begin{equation}\label{gannonmi}\sum\limits_{n>0} m_i(n) q^n=\dfrac{1}{|M_{24}|}\sum\limits_{g\in M_{24}} H_g(\tau)\overline{\chi_i(g)}\end{equation} (with $\chi_i$ an irreducible character of $M_{24}$) has integral coefficients.
What we need to show is that the coefficients $\text{m}_i(n)$ are integral, where
\begin{equation}\label{mi}\sum\limits_{n>0} \text{m}_i(n) q^n=\dfrac{1}{|M_{24}|}\sum\limits_{g\in M_{24}} \left[H_g(\tau)\eta^3(\tau)-2\chi(g)F^{}_2(\tau)\right]\overline{\chi_i(g)}.\end{equation}
To do this, we can split the right hand side of equation (\ref{mi}) into two parts. First consider $\dfrac{1}{|M_{24}|}\sum\limits_{g\in M_{24}} H_g(\tau)\eta^3(\tau)\overline{\chi_i(g)}$. This differs from (\ref{gannonmi}) only from multiplying by $\eta^3(\tau)$, which does not change the integrality. So it suffices to show that $\dfrac{1}{|M_{24}|}\sum\limits_{g\in M_{24}} \chi(g) F^{}_2(\tau)\overline{\chi_i(g)}$ has integral coefficients.
This is the same as showing that \[F^{}_2(\tau)\dfrac{1}{|M_{24}|}\sum\limits_{g\in M_{24}} \chi(g)\overline{\chi_i(g)}=F^{}_2(\tau)\langle \chi, \chi_i \rangle\] has integral coefficients. We already know that $F^{}_2(\tau)$ has integral coefficients. The integrality of $\langle \chi, \chi_i \rangle$ can be seen from the fact that $\chi(g)$ is a character of a module, and so $\langle \chi, \chi_i \rangle$ is the multiplicity of $\chi_i$ in $\chi$, which is necessarily integral. Thus the $\text{m}_i(n)$ from (\ref{mi}) are integral.

\end{proof}




 
 \section{More general framework}\label{Section3}
In this section we show that for certain conjugacy classes $[g]$, the $Q_g(\tau)$ have convenient expressions containing arithmetic information. Further, we show that this type of expression can be generalized to be in terms of an arbitrary prime $N$. \\
If we restrict to $M_{23}$, a subgroup of $M_{24}$ for which all $g$ have $\chi(g)\neq 0$, we can give an alternate expression for the corresponding $Q_g(\tau)$. First, let \begin{equation}\label{e2n}E_{2,N}(\tau):=\dfrac{1}{i(N)\varphi(N)}\sum\limits_{M \mid N} \mu\left(\frac{N}{M}\right)M^2E_2(M\tau)\end{equation} where $N$ is defined to be the order of $g$, $i(N)$ is the index of $\Gamma_0(N)$ in $SL_2(\mathbb{Z})$, and $\varphi$ is the Euler totient function. Note that these functions $E_{2,N}(\tau)$ are quasimodular of weight $2$ on $\Gamma_0(N)$.

 For each $N=|g|$ with $g\in M_{23}$, let $G_N(\tau)$ denote the specific cusp form of level $N$ given explicitly in the appendix. We also let $n_{N}:=\text{num}\left(\dfrac{N-1}{12}\right)$. Then we have the following formula:
\[H_{g}(\tau)\eta^3(\tau)-2\chi(g)F^{}_2(\tau)=-2E_{2,N}(\tau)+\frac{N}{n_{N}}G_N(\tau). \]
This can be easily checked by comparing case-by-case to formula ($B.24$) in \cite{DGO}.
 From this it follows that for $g\in M_{23}$
\begin{equation}\label{eiscuspmathieu}Q_g^{}(\tau)=-2E_{2,N}(\tau)+\frac{N}{n_{N}}G_N(\tau).\end{equation}

Note that the formula for $Q_g(\tau)$ above is defined in terms of $N=|g|$ but the expression does not depend on $M_{23}$ at all. We can define such functions $Q_N(\tau)$ for arbitrary prime $N$ with suitable cusp forms $G_N(\tau)$ that come from a result of Mazur. In what follows we show precisely how to define the $Q_N(\tau)$ including how to use the result of Mazur to determine the cusp form.

The multiples of the cusp forms $G_N(\tau)$ in the expressions (\ref{eiscuspmathieu}) have denominators equal to $n_N$ (recall, $n_N=\text{num}\left(\frac{N-1}{12}\right)$). In the trace functions where $N$ is prime, these denominators reflect a result of a congruence between Eisenstein series and cusp forms that is due to Mazur. 
Due to this result, we are able to define quasimodular forms as in (\ref{eiscuspmathieu}) and show the existence of a $\mathbb{Z}/{N\mathbb{Z}}$-module such that these forms are its trace functions.

First, we give a more elementary argument for Mazur's result (Proposition $5.12$ of \cite{Mazur}) that there exists a cusp form congruent to the Eisenstein series of level $N$ for $N$ prime.  
Denote the $m$th coefficient of the normalized Eisenstein series $\dfrac{1}{24}(NE_2(N\tau)-E_2(\tau))$ by $\sigma_N(m)$. To prove the existence of a cusp form whose coefficients are congruent to $\sigma_N(m)$, we will use theta series, defined in \cite{Gross}. Let $i,j \in \{1\dots n\}$ where $n$ is the number of left ideal classes in the quaternion algebra over $\mathbb{Q}$ ramified at the two places $N$ and $\infty$.  Then the theta series are defined as \begin{equation}\label{thetaseries}f_{ij}:=\frac{1}{2w_j}+\sum\limits_{m\geq1} B_{ij}(m) q^m,\end{equation} where $B_{ij}(m)$ are entries of the Brandt matrix of degree $m$, and $w_j$ are integers that correspond to the cardinalities of certain groups. We will use the fact that these $f_{ij}$ are functions with integral coefficients (except the constant term) on the upper half plane. In fact, the $f_{ij}$ span $M_2(\Gamma_0(N))$ and a certain explicit linear combination of $f_{ij}$ recovers the normalized Eisenstein series. 
See \cite{Quat} and \cite{Martin} for related results using these theta series. We will use them to prove the following proposition.
 
\begin{Proposition}\label{eiscuspcong} Let $N$ be prime. Then there exists a cusp form  $g(\tau)=\sum\limits_{m>0} c_g(m)q^m$ of weight $2$ on $\Gamma_0(N)$ with integer coefficients such that \[c_g(m)\equiv \sigma_N(m)\pmod{n_N} \] for all $m>0$.

\end{Proposition}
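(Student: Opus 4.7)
The plan is to exhibit $g$ as an integer linear combination of the theta series $f_{ij}$, exploiting the identity that expresses the Eisenstein series as a sum of these theta series. Since the congruence is vacuous when $n_N = 1$, it suffices to take $g = 0$ in that case; so I focus on $n_N > 1$.

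The starting point will be the identity
\[
\sum_{j=1}^n f_{ij}(\tau) \;=\; \tfrac{1}{24}\bigl(NE_2(N\tau) - E_2(\tau)\bigr),
\]
valid for each fixed $i$. Matching constant terms will reduce to the Eichler mass formula $\sum_j 1/(2w_j) = (N-1)/24$, and matching $q^m$-coefficients will reduce to the classical Brandt-matrix row-sum formula $\sum_j B_{ij}(m) = \sigma_N(m)$ at prime level. This is the ``certain explicit linear combination of $f_{ij}$'' alluded to just before the proposition.

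Next, writing $(N-1)/12 = n_N/d_N$ in lowest terms, I will seek integers $\alpha_1,\dots,\alpha_n$ satisfying both the cusp condition $\sum_j \alpha_j/(2w_j) = 0$ and the congruence $\alpha_j \equiv 1 \pmod{n_N}$ for every $j$. Setting $g := \sum_j \alpha_j f_{ij}$ then produces a cusp form with integer coefficients, and using the identity above yields
\[
c_g(m) = \sum_j \alpha_j B_{ij}(m) \equiv \sum_j B_{ij}(m) = \sigma_N(m) \pmod{n_N},
\]
as required. Parametrizing $\alpha_j = 1 + n_N \beta_j$ collapses both conditions into the single Diophantine equation $\sum_j \beta_j(d_N/w_j) = -1$.

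Solving this equation is the key elementary step. For $N$ prime one has $w_j \in \{1,2,3\}$, with $w_j = 2$ occurring only if $N \equiv 3 \pmod 4$ (forcing $2 \mid d_N$) and $w_j = 3$ only if $N \equiv 2 \pmod 3$ (forcing $3 \mid d_N$); thus each $d_N/w_j$ is a positive integer. Any common divisor $c$ of the $d_N/w_j$ satisfies $cw_j \mid d_N$ and so $c \mid d_N$, while simultaneously $c \mid \sum_j d_N/w_j = n_N$ by the mass formula; the coprimality $\gcd(n_N,d_N)=1$ then forces $c = 1$, and B\'ezout produces the required $\beta_j$. The main obstacle is the Brandt row-sum identity underlying the first step, which is the classical ingredient bridging the Brandt module to the Eisenstein line in $M_2(\Gamma_0(N))$; once it is in hand, the rest is elementary number theory driven entirely by the Eichler mass formula.
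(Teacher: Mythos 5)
Your proposal is correct, and it rests on the same foundation as the paper's proof: Gross's theta series $f_{ij}$, the row-sum identity $\sum_j f_{ij}=\tfrac{1}{24}(NE_2(N\tau)-E_2(\tau))$, and the fact that a weight-two form on $\Gamma_0(N)$ with vanishing constant term at $\infty$ is cuspidal. Where you diverge is in how the cuspidal combination is produced. The paper takes the two-term combination $u\sum_j f_{1j}-w_1 f_{11}n_N$ with $u=12/\gcd(N-1,12)\in\{1,2,3,6\}$ chosen case by case modulo $12$, which yields a cusp form congruent to $u$ times the Eisenstein series, and then inverts $u$ modulo $n_N$ (checking $\gcd(u,n_N)=1$ in each residue class). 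You instead solve for coefficients $\alpha_j\equiv 1\pmod{n_N}$ across all ideal classes at once via the B\'ezout equation $\sum_j\beta_j(d_N/w_j)=-1$, using the Eichler mass formula to show the relevant gcd divides both $n_N$ and $d_N$. Your route is more uniform (no case split, no final inversion) but requires an extra standard input the paper avoids, namely Eichler's classification $w_j\in\{1,2,3\}$ for $N>3$ together with the congruence conditions on $N$ governing when $w_j=2$ or $3$ occurs, which is what guarantees each $d_N/w_j$ is an integer. Both arguments are complete; yours trades the paper's explicit computations for a cleaner structural statement at the cost of invoking slightly more of the arithmetic of the quaternion order.
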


\begin{proof}
First we treat the cases $N=2,3$. For these cases, $n_N=1$ and the space of weight two cusp forms on $\Gamma_0(N)$ is empty. This means $c_g(m)=0$ for all $m$, and since $\sigma_N(m)$ are necessarily integers, the statement $0\equiv \sigma_N(m) \pmod{1}$ is true for all $m$.\\
For the rest of the cases, we use that any modular form of weight $2$ can be written as a linear combination of the $f_{ij}$, defined in (\ref{thetaseries}). We would like to find a linear combination of $f_{ij}$ with constant term zero, that is, a cusp form, which is congruent to the Eisenstein series modulo $n_N$. Note that finding such a linear combination with constant term zero ensures we have a cusp form. This is because in weight $2$, the sum of the constant terms of a modular form at all cusps must be zero. Since we have that $N$ is prime, $\Gamma_0(N)$ has only two cusps, so it is sufficient to check the vanishing at one cusp.
Recall (equation (5.7) in \cite{Gross}) that the Eisenstein series $\dfrac{1}{24}(NE_2(N\tau)-E_2(\tau))$ is given by \[  \sum\limits_{j=1}^n f_{ij}=\frac{N-1}{24}+\sum\limits_{m>0} \sigma_N(m) q^m =\frac{N-1}{24}+\sum\limits_{m>0}\sum\limits_{j=1}^{n}B_{ij}(m)q^m, \]
for any $i\in\{1\dots n\}$.

\noindent We will use the fact that all primes $N> 3$ are $1,5,7, 11 \pmod{12}.$ \\

\noindent The first case is $N\equiv1\pmod{12}$ which implies that $N-1\equiv 0\pmod{12}$ so $12\mid N-1$.

Here we take \[g(\tau)=\left(\sum\limits_{j=1}^n f_{1j}\right)-w_1f_{11} n_N. \]

The constant term of $2w_1f_{11}$ is $1$ and so the constant term of $-w_1f_{11} n_N$ is $-n_N\frac{1}{2}=-\frac{N-1}{24}$ since $12\mid N-1$, $(N-1,12)=12$. Thus we have:
\[g(\tau)= \frac{N-1}{24} + \sum\limits_{m>0} \sum\limits_{j=1}^{n}B_{1j}(m) q^m - \frac{N-1}{24} -{w_1} {n_N}\sum\limits_{m>0} B_{1j}(m)q^m, \]
and we can cancel the constant terms to get
\[ g(\tau)=\sum\limits_{m>0} \sum\limits_{j=1}^{n}B_{1j}(m) q^m -w_1{n_N}\sum\limits_{m>0} B_{1j}(m)q^m \equiv \sum\limits_{m>0} \sum\limits_{j=1}^{n}B_{1j}(m) q^m \pmod{n_N}, \]
where the left hand side has integer coefficients and the right hand side is equal to the normalized Eisenstein series minus its constant term.\\

\noindent The next case is $N\equiv5\pmod{12}$ so $(N-1,12)=4$. In particular, $N-1$ is coprime to $3$.

Here we take \[g(\tau)=\left(3\sum\limits_{j=1}^n f_{1j}\right)-w_{1}f_{11} n_N. \]

The constant term of $2w_1f_{11}$ is $1$ and so the constant term of $-w_1f_{11} n_N$ is $-n_N\frac{1}{2}=-\frac{3(N-1)}{24}$ since $(N-1,12)=4$. Thus we have:
\[g(\tau)= \frac{3(N-1)}{24} + 3\sum\limits_{m>0} \sum\limits_{j=1}^{n}B_{1j}(m) q^m - \frac{3(N-1)}{24} -w_1 {n_N}\sum\limits_{m>0} B_{1j}(m)q^m, \]
and we can cancel the constant terms to get
\[g(\tau)= 3\sum\limits_{m>0} \sum\limits_{j=1}^{n}B_{1j}(m) q^m -w_1{n_N}\sum\limits_{m>0} B_{1j}(m)q^m \equiv 3\sum\limits_{m>0} \sum\limits_{j=1}^{n}B_{1j}(m) q^m \pmod{n_N}. \]
Since we mentioned that $3 \nmid N-1$, then $3\nmid{n_N}$ and so 
$(n_N,3)=1$. Multiplying both sides by the inverse of $3 \pmod{n_N}$ leaves us with a cusp form with integer coefficients 
congruent to the normalized Eisenstein series (minus its constant term) modulo $n_N$.\\

\noindent The remaining cases, $N=7,11 \pmod{12}$ follow similarly.\\

Note that, for $N=7\pmod{12}$, 
we have $(N-1,12)=6$ and we use \[g(\tau)=\left(2\sum\limits_{j=1}^n f_{1j}\right)-w_1f_{11} n_N . \] Again we will need that $2$ is invertible modulo $n_N$ and since in this case $n_N=\frac{N-1}{6}=2k+1$, we see that $(n_N,2)=1$ so $2$ is invertible modulo $n_N$.\\

And finally, for $N=11\pmod{12}$, 
we have $(N-1,12)=2$ and we use \[g(\tau)=\left(6\sum\limits_{j=1}^n f_{1j}\right)-w_1f_{11} n_N. \] Lastly we will need that $6$ is invertible modulo $n_N$ and since in this case $n_N=\frac{N-1}{2}=6k+5$, we see that $(n_N,6)=1$ so $6$ is invertible modulo $n_N$.
\end{proof}



Now, we have that there exists a cusp form of level $N$ with integral coefficients which is congruent to $\dfrac{1}{24}(NE_2(N\tau)-E_2(\tau))$ modulo $n_N$ except for the constant term. We will use this result in the next proposition.

First we require some notation, let $\ell_N:=\text{num}\left(\dfrac{N^2-1}{24}\right)$. This is the minimum positive integer that clears denominators of $E_{2,N}(\tau)$ because multiplying $E_{2,N}(\tau)$ by $N^2-1$ clears denominators and $24$ is the largest number we can divide by that does not hurt integrality (all coefficients of $E_{2,N}$ except the constant term are divisible by $24$). 
Note that for $N>3$, $\ell_N=\left(\dfrac{N^2-1}{24}\right)$ and for $N=2,3$, $\ell_N=1$.\\

\begin{remark} Hanson Smith noted that $\ell_N$ is an upper bound for the genus of $X_1(N)$ ($N>3$ prime).
\end{remark}

\vspace{3mm}

\noindent With the notation defined above we prove the following proposition. 

\begin{Proposition} Let $\ell_N$ and $n_N$ be as above. Then $\ell_NE_{2,N}$ has integral coefficients and there exists a cusp form $G_N(\tau)$ of level $N$ and weight $2$ with integral coefficients such that \[\ell_NE_{2,N}(\tau) \equiv -N G_N(\tau) \pmod{n_N}. \]

\end{Proposition}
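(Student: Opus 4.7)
The plan is to take $G_N(\tau) := -g(\tau)$, where $g(\tau) = \sum_{m \geq 1} c_g(m)\,q^m$ is the cusp form produced by \Cref{eiscuspcong}, and then verify both claims by directly comparing $q$-coefficients. The key preparatory step is the algebraic identity
\begin{equation*}
\ell_N E_{2,N}(\tau) \;=\; N\cdot E^{\star}_N(\tau) \;+\; \tfrac{N-1}{24}\,E_2(\tau),\qquad E^{\star}_N(\tau) := \tfrac{1}{24}\bigl(NE_2(N\tau) - E_2(\tau)\bigr),
\end{equation*}
which I would deduce (for $N>3$ prime) from the decomposition $N^2 E_2(N\tau) - E_2(\tau) = N\bigl(NE_2(N\tau) - E_2(\tau)\bigr) + (N-1)E_2(\tau)$ together with the formula $\ell_N E_{2,N}(\tau) = \tfrac{1}{24}(N^2 E_2(N\tau) - E_2(\tau))$. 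The cases $N = 2, 3$ have $n_N = 1$, so the congruence is automatic and integrality follows by direct computation.

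For the integrality of $\ell_N E_{2,N}$ in the main case $N > 3$, one simply reads off the $q$-expansion of the identity above as
\begin{equation*}
\ell_N E_{2,N}(\tau) \;=\; \tfrac{N^2-1}{24} + \sum_{m \geq 1}\bigl(N\sigma_N(m) - (N-1)\sigma_1(m)\bigr) q^m,
\end{equation*}
where $\sigma_N(m) = \sum_{d \mid m,\, (d,N) = 1} d$ is the coefficient appearing in \Cref{eiscuspcong}. Since $24 \mid N^2 - 1$ for any prime $N > 3$, the constant term is integral, and the higher coefficients are manifestly so.

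For the congruence, my argument would hinge on two divisibilities. First, $n_N \mid N-1$, which is immediate from the definition $n_N = (N-1)/\gcd(N-1, 12)$. Second, $n_N \mid \tfrac{N^2-1}{24}$: I would write $\tfrac{N^2-1}{24} = n_N \cdot \tfrac{\gcd(N-1,12)(N+1)}{24}$ and verify in each residue class $N \equiv 1, 5, 7, 11 \pmod{12}$ that the second factor is an integer, the deficit of $12 \mid N-1$ being compensated by the divisibility of $N+1$ (for example, when $N \equiv 11 \pmod{12}$ one has $\gcd(N-1,12) = 2$ and $12 \mid N+1$). These two divisibilities reduce the display above modulo $n_N$ to
\begin{equation*}
\ell_N E_{2,N}(\tau) \;\equiv\; N\sum_{m\geq 1} \sigma_N(m)\,q^m \pmod{n_N}.
\end{equation*}
\Cref{eiscuspcong} supplies $c_g(m) \equiv \sigma_N(m) \pmod{n_N}$ for all $m \geq 1$, so $-NG_N(\tau) = Ng(\tau) \equiv N\sum_{m\geq 1}\sigma_N(m)\,q^m \pmod{n_N}$, matching $\ell_N E_{2,N}(\tau)$ coefficient-by-coefficient.

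The main obstacle I would expect is purely combinatorial: checking that the constant term $\tfrac{N^2-1}{24}$ is divisible by $n_N$ requires the four-case analysis modulo $12$, and this is precisely the arithmetic that forces the specific normalization by $\ell_N$ to be the correct one. Once the constant term is handled, the nontrivial content of the congruence is carried entirely by \Cref{eiscuspcong}, and the remainder of the proof reduces to the algebraic identity above.
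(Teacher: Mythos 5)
Your proposal is correct and follows essentially the same route as the paper: the same decomposition $\ell_N E_{2,N}(\tau) = \tfrac{N}{24}\bigl(NE_2(N\tau)-E_2(\tau)\bigr) + \tfrac{N-1}{24}E_2(\tau)$, the same choice $G_N := -g$ with $g$ from \Cref{eiscuspcong}, and the same two divisibilities $n_N \mid N-1$ and $n_N \mid \tfrac{N^2-1}{24}$ (which the paper asserts and you verify by the mod-$12$ case check). Your coefficientwise presentation and explicit integrality formula are slightly more detailed than the paper's $q$-series manipulation, but the argument is the same.
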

\begin{proof}
We first treat the cases $N=2,3$, since for these we have $\ell_N=1$. We would like to show that $E_{2,2}(\tau)$ and $E_{2,3}(\tau)$ are congruent to $0 \pmod{n_N}$ because the space of weight $2$ cusp forms of levels $2$ and $3$ are both empty.  Since $n_2=n_3=1$, this follows because the coefficients of $E_{2,2}(\tau)$ and $E_{2,3}(\tau)$ are integers.\\ 

Now we continue with the remaining cases (and can assume $N>3$). When $N$ is prime, we have the following simplified expression for $E_{2,N}(\tau)$ (cf. \eqref{e2n}): \begin{equation}\label{e2nprime} E_{2,N}(\tau)=\dfrac{1}{(N+1)(N-1)}\left(N^2E_2(N\tau)-E_2(\tau)\right).\end{equation} 
 
 

\noindent We can rearrange this to get a more convenient expression for $E_{2,N}$ as follows:
 \[ E_{2,N}(\tau)=\dfrac{N}{(N+1)(N-1)}\left(NE_2(N\tau)-E_2(\tau)\right)+ \dfrac{1}{N+1} E_2(\tau).\]
 
\noindent Then we can multiply $E_{2,N}$ by $\ell_N$ to get
 \[ \ell_NE_{2,N}(\tau)=\dfrac{N}{24}\left(NE_2(N\tau)-E_2(\tau)\right)+ \dfrac{N-1}{24} E_2(\tau).\]
 By the congruence in the previous proposition we have that there exists a cusp form $g(\tau)$ with integral coefficients of weight $2$ and level $N$  such that \begin{equation}\label{prevprop}\left(\dfrac{1}{24}\left(NE_2(N\tau)-E_2(\tau)\right)\right) -\dfrac{N-1}{24}= g(\tau)+K(\tau)n_N\end{equation} where $K(\tau)$ is some $q$-series with integer coefficients. We subtract $\dfrac{N-1}{24}$ which is equal to the constant term of $\left(\dfrac{1}{24}\left(NE_2(N\tau)-E_2(\tau)\right)\right)$ so that the left hand side of (\ref{prevprop}) has integer coefficients.
 
 \noindent Thus we have
  \[ \ell_NE_{2,N}(\tau)=N\left(g(\tau)+\dfrac{N-1}{24}+K(\tau)n_N\right)+ \dfrac{N-1}{24} E_2(\tau).\]
  
\noindent We define $G_N(\tau):=-g(\tau)$, distribute $N$, and combine constant terms. Then we can write: 
  \[ \ell_NE_{2,N}(\tau)=-NG_N(\tau)+\dfrac{N^2-1}{24}+NK(\tau)n_N+ \dfrac{N-1}{24} (E_2(\tau)-1).\]
  
\noindent Since $\dfrac{N^2-1}{24}$ is an integer multiple of $n_N$ it is $0\pmod{n_N}$. Also, since $N-1$ is a multiple of $n_N$ and each coefficient of $\dfrac{E_2(\tau)-1}{24}$ is an integer, each coefficient of $\dfrac{N-1}{24} (E_2(\tau)-1)$ is $0\pmod{n_N}$, and similarly for $NK(\tau)n_N$. So we have shown that $\ell_NE_{2,N}(\tau) \equiv -N G_N(\tau) \pmod{n_N}. $
 \end{proof}

Now that we have shown this congruence between $\ell_NE_{2,N}(\tau)$ and  $-N G_N(\tau)$ modulo $n_N$, we have that for $N$ prime, the expression $\dfrac{\ell_N E_{2,N}(\tau)+NG_N(\tau)}{n_N}$ has integer coefficients. We can call these our \begin{equation}\label{QN} Q_N^{(N)}(\tau):=\frac{-\ell_N E_{2,N}(\tau)-NG_N(\tau)}{n_N},\end{equation} and we define \begin{equation}\label{Q1} Q_1^{(N)}(\tau):=\frac{-\ell_N}{n_N} E_{2,1}(\tau).\end{equation} Note that $Q_1^{(N)}(\tau)$ also has integer coefficients and that $E_{2,1}(\tau)=E_2(\tau)$.

\noindent These functions (\ref{QN}) and (\ref{Q1}) will be our trace functions for the $\mathbb{Z}/N\mathbb{Z}$ module. In order to prove the existence of this module we require the following lemma:

\begin{Lemma}\label{congruencelem} Let $N$ be prime. Then \[Q_N^{(N)}(\tau) \equiv Q_1^{(N)}(\tau) \pmod{N}. \]
\end{Lemma}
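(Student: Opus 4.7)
The plan is to verify the claimed congruence coefficient-by-coefficient by writing the difference $Q_N^{(N)}(\tau) - Q_1^{(N)}(\tau)$ explicitly in the form $-N \cdot (\text{integer } q\text{-series})$. The starting point is the explicit identity derived inside the proof of the previous proposition: for primes $N > 3$,
\[
\ell_N E_{2,N}(\tau) + N G_N(\tau) = \frac{N^2-1}{24} + N K(\tau) n_N + \frac{N-1}{24}\bigl(E_2(\tau)-1\bigr),
\]
where $K(\tau)$ is a $q$-series with integer coefficients. Dividing by $-n_N$ supplies a closed form for $Q_N^{(N)}(\tau)$.

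Next I would expand $Q_1^{(N)}(\tau) = -\frac{\ell_N}{n_N} E_2(\tau)$ using $E_2(\tau) - 1 = -24 \sum_{n \geq 1} \sigma_1(n) q^n$. Setting $d := \gcd(N-1, 12)$ so that $n_N = (N-1)/d$, one has $\frac{N-1}{n_N} = d$ and $\frac{N^2-1}{n_N} = d(N+1)$. Substituting these identities, the constant terms $-\ell_N/n_N$ on both sides cancel, each fractional coefficient clears, and after collecting terms the difference should collapse to
\[
Q_N^{(N)}(\tau) - Q_1^{(N)}(\tau) = -N\Bigl(K(\tau) + d \sum_{n \geq 1} \sigma_1(n) q^n\Bigr),
\]
which is visibly an integer $q$-series divisible by $N$.

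The small cases $N = 2, 3$ must be handled separately, since there $\ell_N = n_N = 1$ and $G_N = 0$ and the identity above degenerates. In these cases the claim reduces to $E_{2,N}(\tau) \equiv E_2(\tau) \pmod{N}$, which I would deduce from the clean identity $(N^2-1)(E_{2,N}(\tau) - E_2(\tau)) = N^2(E_2(N\tau) - E_2(\tau))$ by reducing mod $N$ and using $\gcd(N^2-1, N) = 1$ to invert $N^2 - 1$.

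The main obstacle I anticipate is purely the bookkeeping: both $\ell_N/n_N$ and $(N-1)/(24 n_N)$ are a priori fractional, so showing the difference is even an integer $q$-series (let alone one divisible by $N$) requires carefully combining the arithmetic of $\gcd(N-1,12)$ with the factor of $24$ hidden in $E_2(\tau)-1$. The cancellation of the constant term $-\ell_N/n_N$ between $Q_N^{(N)}$ and $Q_1^{(N)}$ is the reason the argument closes cleanly, and without it there would be a residual $\ell_N/n_N$ floating around that does not obviously reduce mod $N$.
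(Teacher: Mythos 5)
Your proof is correct and follows essentially the same route as the paper's: both arguments come down to the observation that the $N^2E_2(N\tau)$ and $NG_N(\tau)$ contributions vanish modulo $N$ while the surviving Eisenstein pieces agree (and the small cases $N=2,3$ are handled by inverting $N^2-1$ modulo $N$). The only cosmetic difference is that the paper multiplies through by $n_N$ and invokes $\gcd(n_N,N)=1$ at the end, whereas you divide the identity from the preceding proposition by $n_N$ and exhibit the difference directly as $-N$ times an explicit integral $q$-series.
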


\begin{proof} Again we start with the cases $N=2,3$:\\

\noindent For $N=2$, we have $Q_2^{(2)}(\tau)=-E_{2,2}(\tau)=\frac{-1}{3}(4E_2(2\tau)-E_2(\tau))$ and $Q_1^{(2)}(\tau)=-E_{2}(\tau)$. To show that $Q_2^{(2)}(\tau)\equiv Q_1^{(2)}(\tau) \pmod{2}$, it suffices to show that $3Q_2^{(2)}(\tau)\equiv 3Q_1^{(2)}(\tau) \pmod{2}$ because $(2,3)=1$. We see the latter because $3Q_2^{(2)}(\tau)=-4E_2(2\tau)+E_2(\tau)$, $3Q_1^{(2)}(\tau)=-3E_2(\tau)$, and $-3\pmod{2}=1\pmod{2}$.\\
 
\noindent The case $N=3$ is similar, we have $Q_3^{(3)}(\tau)=E_{2,3}(\tau)=\frac{-1}{8}(9E_2(3\tau)-E_2(\tau))$ and  $Q_1^{(3)}(\tau)=-E_{2}(\tau)$. Again, it suffices to show that $8Q_3^{(3)}(\tau)\equiv 8Q_1^{(3)}(\tau) \pmod{3}$, which can be seen using the fact that $-8\pmod{3}=1\pmod{3}$. \\

\noindent For $N>3$, we will show that \[{Q_N^{(N)}(\tau)}{n_N} \equiv {Q_1^{(N)}(\tau)}{n_N} \pmod{N}. \]  If the above congruence is true then since $(n_N,N)=1$, the congruence in the statement of the lemma will be true. 
Beginning with the left hand side, we have that
\[{Q_N^{(N)}(\tau)}{n_N}=\dfrac{-1}{24} \left( N^2E_2(N\tau)-E_2(\tau) \right)- NG_N(\tau).\]
\noindent We can rewrite this as 
\[{Q_N^{(N)}(\tau)}{n_N}=\dfrac{-N^2}{24}E_2(N\tau)+\dfrac{1}{24}E_2(\tau)- NG_N(\tau),\]

\noindent or equivalently 
\[{Q_N^{(N)}(\tau)}{n_N}=\dfrac{-N^2}{24}\left(E_2(N\tau)-1\right) -\dfrac{N^2}{24} +\dfrac{1}{24}\left(E_2(\tau)-1\right) +\dfrac{1}{24}- NG_N(\tau).\]

\noindent Reducing modulo $N$ gives \[Q_N^{(N)}(\tau)n_N \equiv \dfrac{1}{24}(E_2(\tau)-1)-\dfrac{N^2-1}{24} \pmod{N}. \]

\noindent Next we look at the right hand side. We have

\[Q_1^{(N)}(\tau){n_N}=-\ell_N E_{2,1}(\tau)=-\dfrac{N^2-1}{24}E_2(\tau)=-\dfrac{N^2}{24}E_2(\tau)+ \dfrac{1}{24}E_2(\tau)\]
which is equal to
\[-\dfrac{N^2}{24}\left(E_2(\tau)-1\right) + \dfrac{1}{24}\left(E_2(\tau)-1\right) -\dfrac{N^2-1}{24}.\]
Reducing modulo $N$ we get
\[Q_1^{(N)}(\tau){n_N}\equiv \dfrac{1}{24}\left(E_2(\tau)-1\right) -\dfrac{N^2-1}{24}\pmod{N}.\]

\noindent And thus  \[Q_N^{(N)}(\tau){n_N} \equiv Q_1^{(N)}(\tau){n_N} \pmod{N}. \] \end{proof}
\vspace{3mm}
\noindent For $N$ prime, we give quasimodular weight $2$ trace functions for $\mathbb{Z}/N\mathbb{Z}$ as follows: 
\[f_{g}^{(N)}(\tau):=\begin{dcases}
    Q_1^{(N)}(\tau) & \text{if } g=e, \\
    Q_N^{(N)}(\tau) &\text{if } g\neq e.
\end{dcases} \]

\begin{Theorem}\label{eiscuspmoonshine} There exists a virtual graded $\mathbb{Z}/N\mathbb{Z}$-module $V=\bigoplus\limits_{n}V_n$ such that \[f_{g}^{(N)}(\tau)=\sum\limits_{n=0}^{\infty}\emph{tr}(g \mid V_n)q^n\] where $f_{g}^{(N)}(\tau)$ is the quasimodular form of weight $2$ and level $N$ with integral coefficients as defined as above.
\end{Theorem}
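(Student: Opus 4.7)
The plan is to apply the standard character-theoretic criterion for the existence of a virtual graded module: the $\mathbb{Z}/N\mathbb{Z}$-class functions defined by the coefficients of the candidate trace functions arise from a virtual graded module if and only if, for every irreducible character $\chi_j$ of $\mathbb{Z}/N\mathbb{Z}$, the inner product $\langle \phi_n,\chi_j\rangle$ is an integer for every $n\ge 0$. Since $\mathbb{Z}/N\mathbb{Z}$ is abelian with $N$ one-dimensional irreducible characters $\chi_j(k)=\zeta_N^{jk}$, the whole verification collapses to a single congruence together with the integrality of the two $q$-series in play.

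First I set $a(n):=[q^n]Q_1^{(N)}(\tau)$ and $b(n):=[q^n]Q_N^{(N)}(\tau)$. The remarks preceding \eqref{QN} and \eqref{Q1} already give $a(n),b(n)\in\mathbb{Z}$. Because $f_g^{(N)}$ depends on $g$ only through whether $g=e$, the class function on $\mathbb{Z}/N\mathbb{Z}$ determined by the coefficient of $q^n$ is $\phi_n(e)=a(n)$ and $\phi_n(g)=b(n)$ for every non-identity $g$.

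Second, I compute the multiplicities by character orthogonality. For the trivial character $\chi_0$, using $\sum_{g\ne e}\chi_0(g)=N-1$, we get $m_0(n)=\tfrac{1}{N}(a(n)+(N-1)b(n))$. For any nontrivial $\chi_j$, the identity $\sum_{g}\chi_j(g)=0$ gives $\sum_{g\ne e}\chi_j(g)=-1$, so $m_j(n)=\tfrac{1}{N}(a(n)-b(n))$.

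The final step is to invoke \Cref{congruencelem}, which is precisely the statement $a(n)\equiv b(n)\pmod{N}$ for every $n$. This immediately yields $m_j(n)\in\mathbb{Z}$ for every nontrivial $j$, and it also handles the trivial character since $a(n)+(N-1)b(n)=(a(n)-b(n))+Nb(n)\equiv 0\pmod{N}$. Consequently every $\phi_n$ is the character of a virtual finite-dimensional representation $V_n$ of $\mathbb{Z}/N\mathbb{Z}$, and $V:=\bigoplus_n V_n$ is the required virtual graded module. The main technical obstacle is \Cref{congruencelem} itself; once that congruence is available, the theorem is a direct application of orthogonality of characters on a cyclic group, and no additional input is needed.
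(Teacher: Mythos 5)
Your proposal is correct and follows essentially the same route as the paper: both reduce the existence of the virtual module to the integrality of the multiplicities $\tfrac{1}{N}(a(n)+(N-1)b(n))$ and $\tfrac{1}{N}(a(n)-b(n))$ via character orthogonality on $\mathbb{Z}/N\mathbb{Z}$, and both then conclude by invoking the congruence $Q_1^{(N)}\equiv Q_N^{(N)}\pmod{N}$ of \Cref{congruencelem}. No substantive differences.
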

\begin{proof} Showing the existence of this virtual module amounts to showing that the multiplicities of the irreducible representations of $\mathbb{Z}/N\mathbb{Z}$ in the module are integral. Thus we need to show the integrality of $\dfrac{1}{N} \sum\limits_{g\in \mathbb{Z}/N\mathbb{Z}} c_g(n)\overline{\chi_i(g)}$, for each $i$, where $c_g(n)$ are the coefficients of the trace functions for $g\in \mathbb{Z}/N\mathbb{Z}$, and $\chi_i(g)$ are irreducible characters of $\mathbb{Z}/N\mathbb{Z}$. This is equivalent to showing that $\langle \chi_i, f_g^{(N)}(\tau) \rangle $ is integral, for $i \in \{1\dots N\}$. 
For $\chi_1$, the character corresponding to the trivial representation, we have
 \begin{equation}\label{triv}\langle \chi_1, f_g^{(N)}(\tau) \rangle=\dfrac{1}{N}(Q_1^{(N)}(\tau)+(N-1)Q_N^{(N)}(\tau) )=Q_N^{(N)}(\tau)+\dfrac{1}{N}(Q_1^{(N)}(\tau)-Q_N^{(N)}(\tau)).\end{equation}
 
In other words, for integrality of (\ref{triv}) we need that $Q_1^{(N)}(\tau)\equiv Q_N^{(N)}(\tau)\pmod{N}$. This is true by Lemma \ref{congruencelem}.

For all other $\chi_i$, we make use of the fact that, for $\zeta$ a primitive $N$th root of unity, $\zeta+\dots+\zeta^{N-1}=-1$ and in particular that $\zeta^k+\dots+\zeta^{k(N-1)}=-1$ for $1\leq k\leq N-1$.
So all other $\chi_i$'s give  \begin{equation} \langle \chi_i, f \rangle=\dfrac{1}{N}(Q_1^{(N)}(\tau)-Q_N^{(N)}(\tau)), \end{equation}
which is again integral by the congruence $Q_1^{(N)}(\tau)\equiv Q_N^{(N)}(\tau)\pmod{N}$.
\end{proof}

\section{Arithmetic/geometric connections}\label{Section4}
In this section, we describe some arithmetic connections between the trace functions of the $M_{23}$-module given in the previous section with expressions in (\ref{eiscuspmathieu}) and the $\mathbb{F}_p$ point counts on (Jacobians of) modular curves. The expressions for $E_{2,N}(\tau)$ are not always integral on their own, and in the levels with cusp forms we saw that adding a multiple (with specific denominator) of a cusp form to $E_{2,N}(\tau)$ gives an expression with integral coefficients. The choices of cusp forms we used in the previous section (given explicitly in the appendix) are such that we get integral coefficients when we add $\frac{N}{n_{N}} G_N$ to $E_{2,N}(\tau)$.\\

 For $M_{23}$ this is summarized below:
\begin{center} 
\begin{enumerate}[(a)]
\item []
\item$ -2E_{2,{11}} (\tau)+{\dfrac{11}{5}}G_{11}(\tau) $has integral coefficients.
\vspace{1mm}
\item  $-2E_{2,{14}}(\tau)+{\dfrac{14}{3}}G_{14}(\tau) $ has integral coefficients.
\vspace{1mm}

\item $ -2E_{{2,15}}(\tau)+{\dfrac{15}{4}}G_{15}(\tau)$ has integral coefficients.
\vspace{1mm}

\item $-2E_{{2, 23}}(\tau)+{\dfrac{23}{11}}G_{23}(\tau)$ has integral coefficients.

\end{enumerate}
\end{center}

\noindent Because coefficients of weight $2$ cusp forms admit a certain geometric interpretation, these expressions give divisor conditions on the number of $\mathbb{F}_p$ points on Jacobians of modular curves. Let $J_{0}(N)$ denote the Jacobian of the modular curve $X_0(N)$. For $N=11,14,15$, we have that $J_0(N)$ is an elliptic curve. For $N=23$ $J_0(N)$ is an abelian surface, namely, the Jacobian of a genus $2$ curve.

For simplicity, here, we restrict our attention to $N$ such that $X_0(N)$ are elliptic curves. 

The result below relies on the relationship between cusp forms and point counts on elliptic curves. For an introductory reference for elliptic curves, their $\mathbb{F}_p$ points, and the relationship to cusp forms of weight $2$, see \cite{milne} (in particular, we apply Theorem $7.10$ of \textit{loc.~cit.}).

\noindent \begin{Corollary}\label{divisibility} We have the following (known) divisibility conditions arising from $M_{23}$ 
\begin{enumerate} 
 \item  For $p\neq 11$, we have $5\mid \# J_0(11)(\mathbb{F}_p)$. 
 \item For $p\neq 2,7$ we have $3\mid \# J_0(14)(\mathbb{F}_p)$. 
 \item For $p\neq 3,5$, we have $4\mid \# J_0(15)(\mathbb{F}_p)$. 
 
\end{enumerate}

\end{Corollary}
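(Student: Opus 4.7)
The plan is to read the divisibility conditions off directly from the integrality statements (a)--(c) preceding the corollary, using Eichler--Shimura to convert coefficients of the cusp form $G_N$ into $\mathbb{F}_p$-point counts on $J_0(N)$. For $N\in\{11,14,15\}$ the curve $X_0(N)$ has genus $1$, so $S_2(\Gamma_0(N))$ is one-dimensional and spanned by the normalized newform $f_N$ attached to the elliptic curve $E_N:=J_0(N)$. In particular the cusp form $G_N$ from the appendix is an integer multiple $c_N f_N$, and by Eichler--Shimura (Theorem 7.10 of \cite{milne}), for every prime $p$ of good reduction of $E_N$ one has
\[ a_p(f_N)=p+1-\#E_N(\mathbb{F}_p). \]

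Next I would compute the coefficient of $q^p$ in $-2E_{2,N}(\tau)$ for an auxiliary prime $p$ coprime to $N$. In the M\"obius sum in \eqref{e2n} only the term $M=1$ can contribute a $q^p$, since for any other divisor $M\mid N$ with $M>1$ we have $M\nmid p$. A short computation then shows that the $q^p$-coefficient of $-2E_{2,N}$ is an explicit rational multiple of $p+1$ whose denominator is exactly the integer $n_N\in\{5,3,4\}$ occurring in the three listed expressions.

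Imposing integrality of the coefficient of $q^p$ in $-2E_{2,N}(\tau)+\tfrac{N}{n_N}G_N(\tau)$ then yields a congruence of the shape
\[ N\,a_p(G_N)\;\equiv\;\alpha(p+1)\pmod{n_N} \]
for an explicit integer $\alpha$. Since $\gcd(N,n_N)=1$ in each of the three cases, $N$ is invertible modulo $n_N$; substituting $G_N=c_N f_N$ and using the appendix normalization of $G_N$, the congruence reduces to $a_p(f_N)\equiv p+1\pmod{n_N}$, whence
\[ \#E_N(\mathbb{F}_p)\;=\;p+1-a_p(f_N)\;\equiv\;0\pmod{n_N}, \]
which is exactly the claim. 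The primes excluded in each part ($p=11$ for $N=11$; $p\in\{2,7\}$ for $N=14$; $p\in\{3,5\}$ for $N=15$) are precisely the bad primes of $E_N$, at which the Eichler--Shimura formula does not apply.

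The only real obstacle is bookkeeping: one must check case-by-case that the normalization $c_N$ of $G_N$ pinned down in the appendix conspires with the rational Eisenstein contribution so that $\alpha\equiv c_N\cdot N\pmod{n_N}$, giving the clean congruence $a_p\equiv p+1\pmod{n_N}$ rather than a congruence to some other residue. The conceptual content lies entirely in the Mazur-type Eisenstein/cusp-form congruence established in Section \ref{Section3}; everything else is coefficient comparison and an application of Eichler--Shimura.
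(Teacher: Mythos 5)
Your proposal is correct and follows essentially the same route as the paper: extract the $q^p$-coefficient of $-2E_{2,N}+\tfrac{N}{n_N}G_N$ for $p$ coprime to $N$ (only the $M=1$ term of the M\"obius sum contributes, giving a multiple of $\sigma(p)=p+1$ with denominator $5$, $3$, or $4$), impose integrality, and convert the resulting congruence on $a_p$ into the point-count statement via Eichler--Shimura, with the excluded primes being exactly the bad primes. The only difference is presentational: the paper carries out the bookkeeping you defer (e.g.\ $G_{11}=2\widetilde{G}_{11}$, so the congruence $-2(p+1)+22c_{11}(p)\equiv 0\pmod 5$ reduces cleanly) explicitly in each of the three cases, and additionally records the point counts at the bad primes by direct evaluation.
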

 \begin{proof} (1) We have that \[\left(\dfrac{-121}{60}E_2(11\tau)+\dfrac{1}{60}E_2(\tau)\right)+{\dfrac{11}{5}}G_{11}(\tau)\in \mathbb{Z}[\![q]\!]. \]  Using the definition of $E_2(\tau)$, we see that this is equal to \[-2+\dfrac{242}{5}\sum_{m=1}^{\infty} \sigma(m)q^{11m}-\dfrac{2}{5}\sum\limits_{n=1}^{\infty} \sigma(n)q^n+ \dfrac{11}{5}G_{11}(\tau).\]
 Note that we defined $G_{11}(\tau)=2\eta^2(\tau)\eta^2(11\tau)$ so all of its coefficients are even, and in fact $G_{11}(\tau)=2\widetilde{G}_{11}(\tau)$ where the $\widetilde{G}_{11}(\tau)=\sum\limits_{n>0} c_{11}(n)$ is the normalized cusp form whose coefficients correspond to the number of $\mathbb{F}_p$ points on the Jacobian  of $X_0(11)$, denoted $\# J_0(11)(\mathbb{F}_p)$. The correspondence is given as follows:
 \begin{equation}\label{essential}c_{11}(p)=p+1-\#J_0(11)(\mathbb{F}_p).\end{equation}
 
We can see that if $11\nmid n$, then the $n$th coefficient of (a) is $-\dfrac{2}{5}\sigma(n)+\dfrac{22}{5}c_{11}(n) \in \mathbb{Z}$. The integrality of the coefficient of $p\neq 11$ then implies that $-2(p+1)+22c_{11}(p)\equiv 0 \pmod{5}$. Substituting $c_{11}(p)$ for the right hand side of \eqref{essential} gives us that $\# J_0(11)(\mathbb{F}_p)\equiv 0 \pmod{5}$.

We note that for $p=11$, we can directly compute $\# J_0(11)(\mathbb{F}_{11})$ with the fact that $c_{11}(11)=1$. Thus we have $\# J_0(11)(\mathbb{F}_{11})=11.$

(2) If $2,7\nmid n$, then the $n$th coefficient of (b) is $\dfrac{1}{3}\sigma(n)+\dfrac{14}{3}c_{14}(n) \in \mathbb{Z}$. The integrality of the coefficient of $p\neq 2,7$ then implies that $p+1+14c_{14}(p)\equiv 0 \pmod{3}.$ We substitute $c_{14}(p)$ for the right hand side of the following:
\begin{equation}\label{essential14}c_{14}(p)=p+1-\#J_0(14)(\mathbb{F}_p),\end{equation} 
and this gives us that $\# J_0(14)(\mathbb{F}_p)\equiv 0 \pmod{3}$.

We note that for $p=2$ and $p=7$, we can directly compute $\# J_0(14)(\mathbb{F}_2)$ and $\# J_0(14)(\mathbb{F}_7)$ with the facts that $c_{14}(2)=-1$ and $c_{14}(7)=1$. Thus we have $\# J_0(14)(\mathbb{F}_2)=4$ and $\# J_0(14)(\mathbb{F}_7)=7.$

(3) If $3,5\nmid n$, then the $n$th coefficient of (c) is $\dfrac{1}{4}\sigma(n)+\dfrac{15}{4}c_{15}(n) \in \mathbb{Z}$. The integrality of the coefficient of $p\neq 3,5$ then implies that $p+1+15c_{15}(p)\equiv 0 \pmod{4}$ We substitute $c_{15}(p)$ for the right hand side of the following:
\begin{equation}\label{essential15}c_{15}(p)=p+1-\#J_0(15)(\mathbb{F}_p),\end{equation} 
and this gives us that $\# J_0(15)(\mathbb{F}_p)\equiv 0 \pmod{4}$.

We note that for $p=3$ and $p=5$, we can directly compute $\# J_0(15)(\mathbb{F}_3)$ and $\# J_0(15)(\mathbb{F}_5)$ with the facts that $c_{15}(3)=-1$ and $c_{15}(5)=1$. Thus we have $\# J_0(15)(\mathbb{F}_3)=5$ and $\# J_0(15)(\mathbb{F}_5)=5.$
\end{proof}
\vspace{3mm}
We expect infinitely many more such expressions (as in Corollary \ref{divisibility}) arising from the trace functions of the $\mathbb{Z}/N\mathbb{Z}$-modules of Theorem \ref{eiscuspmoonshine}. In the cases above, since the modular curves are elliptic curves and therefore (isomorphic to) their own Jacobians, any divisibility conditions on the number of $\mathbb{F}_p$ points on the Jacobians are equivalent to divisibility conditions on the number of $\mathbb{F}_p$ points on the modular curves themselves. In general, for any prime $N$, the integrality of trace functions from these $\mathbb{Z}/N\mathbb{Z}$-modules are equivalent to divisibility conditions on the number of $\mathbb{F}_p$ points on the Jacobians  of $X_0(N)$ (cf. Theorem $7.10$ of \cite{milne}).

\begin{remark} The integrality conditions used in the proof or Corollary \ref{divisibility} implied congruences of the form $-2(p+1)+22c_{11}(p)\equiv 0 \pmod{5}$. These are equivalent to statements such as $(p+1)\equiv 0\pmod{5}$ iff $c_{11}(p)\equiv 0 \pmod{5}$. Jeffrey Lagarias noted that if one reframes these statements as, for example, \[ p=4\pmod{5} \text{ if and only if } 5\mid c_{11}(p),\] 
the expressions can then be written in the following way resembling the Ramanujan congruences:
 \[ \text{For } p=5n+4 \text{ we have } c_{11}(p)=0\pmod{5}.\]
\end{remark}
\vspace{1mm}
\begin{remark} In this formulation of the trace functions of the $M_{23}$-module (\ref{eiscuspmathieu}), we made a choice with the multiple of the cusp form. The denominator is fixed but the choices we made of the numerator are not unique. In fact, we could add any multiple of $NG_N(\tau)$ and still satisfy the congruences necessary for Mathieu moonshine. Therefore, the module here is one in an infinite family of possible modules one can consider. A similar statement holds for the modules of Theorem \ref{eiscuspmoonshine}. It would be interesting to see if stronger results about point counts on modular Jacobians might be obtained by studying these families as a whole.
\end{remark}


\section{An explicit module construction}\label{Section5}
In the previous sections, the trace functions in Theorem \ref{eiscuspmoonshine} and those of the $M_{23}$ module (\ref{eiscuspmathieu}) have involved cusp forms. The results of this were some arithmetic/geometric observations in addition to the minimality of the constant ($\frac{\ell_N}{n_N}$) in front of $E_{2,N}(\tau)$ that guarantees integrality of trace functions' coefficients. If we restrict our functions to only involve Eisenstein series, we remove the cusp form contribution (thus we can no longer divide by $n_N$). We define an alternative set of trace functions for $\mathbb{Z}/N\mathbb{Z}$ in this way.
\noindent For $N$ prime, we give purely Eisenstein quasimodular weight $2$ trace functions for a $\mathbb{Z}/N\mathbb{Z}$-module as follows: 
\[F_{g}^{(N)}(\tau):=\begin{dcases}
   - \ell_NE_{2}(\tau) & \text{if } g=e, \\
   -\ell_NE_{2,N}(\tau) &\text{if } g\neq e.
\end{dcases} \]

 It can be easily seen from the methods in Section \ref{Section3} that there also exists a module for which these are the trace functions. Indeed, we will construct such a module explicitly in this section.
 
 Although the modules with quasimodular trace functions involving cusp forms were arguably more interesting, the advantage of purely Eisenstein quasimodular trace functions is that we can actually give a construction of the module in terms of vertex operator algebras. Moreover, when the space of cusp forms of level $N$ is empty, the purely Eisenstein trace functions are equal to the trace functions in Theorem \ref{eiscuspmoonshine}. Given this, it would be interesting to see if the method presented here may be modified so as to obtain the modules that do include cusp from contributions, discussed in Section \ref{Section3}.
 
To construct the purely Eisenstein modules we will find a vertex operator algebra that has the $F_{g}^{(N)}(\tau)$ as their trace functions. For this we will use two  Heisenberg vertex algebras and a Clifford module vertex algebra. 
 First, note that for $N>3$ we have $\ell_N=\dfrac{N^2-1}{24}$ and 
 \begin{align*}
 q\dfrac{d}{dq} \log \left(\dfrac{1}{\eta^{(N^2-1)}(\tau)}\right)&= -\dfrac{N^2-1}{24} E_2(\tau), \text{ and}\\
 q\dfrac{d}{dq} \log\left(\dfrac{\eta(\tau)}{\eta^N(N\tau)}\right)&= -\dfrac{N^2-1}{24} E_{2,N}(\tau).
 \end{align*}
 For the remaining cases $N=2,3$, we have that $\ell_N=1$.\\
When $N=2$ we have 
  \begin{align*}
 q\dfrac{d}{dq} \log  \left(\dfrac{1}{\eta^{24}(\tau)}\right)&= - E_2(\tau), \text{ and}\\
q\dfrac{d}{dq} \log \left(\dfrac{\eta^8(\tau)}{\eta^{16}(2\tau)}\right)&= - E_{2,2}(\tau). 
\end{align*}
And when $N=3$ we have
\begin{align*}
 q\dfrac{d}{dq} \log  \left(\dfrac{1}{\eta^{24}(\tau)}\right)&= - E_2(\tau), \text{ and}\\
q\dfrac{d}{dq} \log \left(\dfrac{\eta^3(\tau)}{\eta^{9}(3\tau)}\right)&= - E_{2,3}(\tau).
 \end{align*}
In what follows, we describe the module construction. 


Let $D$ denote the derivative $D(\cdot):=q\frac{d}{dq}(\cdot)$. From the above equations we see that finding a module whose trace functions are equal to $F_{g}^{(N)}(\tau)$ with $N>3$ is equivalent to finding a module whose trace functions are equal to:
\begin{equation}\label{thateq1} \begin{dcases} D\left(\dfrac{1}{\eta^{(N^2-1)}(\tau)}\right)\eta^{N^2}(\tau)\dfrac{1}{\eta(\tau)} & \text{if } g=e, \\
D\left(\dfrac{\eta(\tau)}{\eta^N(N\tau)}\right)\eta^N(N\tau)\dfrac{1}{\eta(\tau)} & \text{if } g\neq e.
 \end{dcases}\end{equation}
 
Similarly, we note that finding a module whose trace functions are  $F_{g}^{(2)}(\tau)$ and  $F_{g}^{(3)}(\tau)$ are equivalent to finding a module whose trace functions are equal to:
 
 \begin{equation}\label{thateq2} \begin{dcases} D\left(\dfrac{1}{\eta^{24}(\tau)}\right)\eta^{32}(\tau)\dfrac{1}{\eta^8(\tau)} & \text{if } g=e, \\
D\left(\dfrac{\eta^8(\tau)}{\eta^{16}(2\tau)}\right)\eta^{16}(2\tau)\dfrac{1}{\eta^8(\tau)} & \text{if } g\neq e,
 \end{dcases}\end{equation}
 and
 \begin{equation}\label{thateq3} \begin{dcases} D\left(\dfrac{1}{\eta^{24}(\tau)}\right)\eta^{27}(\tau)\dfrac{1}{\eta^3(\tau)} & \text{if } g=e, \\
D\left(\dfrac{\eta^3(\tau)}{\eta^9(3\tau)}\right)\eta^9(3\tau)\dfrac{1}{\eta^3(\tau)} & \text{if } g\neq e,
 \end{dcases}\end{equation}
respectively.
The next two lemmas indicate how to recover the first of the three factors in each of \cref{thateq1,thateq2,thateq3}. To formulate it, let $\mathfrak{h}=\mathbb{C}^{24\ell_N}$ and let $\gamma^{}$ be an automorphism of order $N$ of $\mathfrak{h}$ such that its characteristic polynomial is $\text{char}_{\gamma^{}}(x)=\dfrac{(x^N-1)^N}{(x-1)}$ (For $N=2,3$, take the characteristic polynomials $\text{char}_{\gamma^{}}(x)=\dfrac{(x^2-1)^{16}}{(x-1)^8}$ and $\text{char}_{\gamma^{}}(x)=\dfrac{(x^3-1)^9}{(x-1)^3}$, respectively). 
 Let $V^{}:= S(b_i(-n) \mid n>0; \hspace{1mm} i=1,\dots, 24\ell_N)$ (where $S(x_1, x_2 \dots):= S(\oplus_{i=1}^{\infty} \mathbb{C}x_i)$) be the Heisenberg vertex algebra for $\mathfrak{h}$ with non-degenerate symmetric bilinear form $\langle \cdot, \cdot \rangle$ fixed by $\gamma$. The action of $\gamma$ on $\mathfrak{h}$ extends naturally to $V$. See \cite{FrenkelBenzvi} for more information on the Heisenberg vertex algebra construction. We denote the $L(0)$ operator for this Heisenberg vertex algebra by $L_1(0)$ and let $c_1$ be its central charge.  
 \begin{Lemma}\label{tr1} For $N>3$, we have graded trace functions for $V^{}$ as follows:
 \[\emph{tr}\left(q^{L_1(0)-\frac{c_1}{24}}\mid V^{}\right)=\dfrac{1}{\eta^{(N^2-1)}(\tau)}\]
 and 
 \[\emph{tr}\left(\gamma q^{L_1(0)-\frac{c_1}{24}}\mid V^{}\right)=\dfrac{\eta(\tau)}{\eta^N(N\tau)}.\]
 \end{Lemma}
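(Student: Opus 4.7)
The plan is to reduce both traces to the standard character formula for the Heisenberg vertex algebra $V$ of rank $d = 24\ell_N = \dim \mathfrak{h}$ with central charge $c_1 = d$. Because the action of $\gamma$ on the generating space $\mathfrak{h}$ extends to a semisimple action on $V = S(\mathfrak{h}[t^{-1}]t^{-1})$ commuting with $L_1(0)$, a standard computation (see, e.g., the Fock space exposition in \cite{FrenkelBenzvi}) gives
\begin{equation*}
\mathrm{tr}\!\left(\gamma\, q^{L_1(0)-c_1/24}\mid V\right)
= q^{-d/24}\prod_{i=1}^{d}\prod_{n\geq 1}\bigl(1-\lambda_i q^n\bigr)^{-1},
\end{equation*}
where $\lambda_1,\dots,\lambda_d$ are the eigenvalues of $\gamma$ on $\mathfrak{h}$, counted with multiplicity. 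Setting $\gamma = \mathrm{id}$ collapses the inner product to $\eta(\tau)^{-d}$, which already yields the first identity of the lemma in all three cases $N>3$, $N=2$, $N=3$ (since $d=N^2-1$, $24$, $24$ respectively).

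For the twisted trace with $N>3$, I would read off the eigenvalue multiplicities from $\mathrm{char}_\gamma(x) = (x^N-1)^N/(x-1)$: the eigenvalue $1$ occurs with multiplicity $N-1$, while each primitive power $\zeta^k$ ($k=1,\dots,N-1$) of the fixed primitive $N$th root $\zeta$ occurs with multiplicity $N$. Hence the twisted trace equals
\begin{equation*}
q^{-(N^2-1)/24}\prod_{n\geq 1}(1-q^n)^{-(N-1)}\prod_{k=1}^{N-1}(1-\zeta^k q^n)^{-N}.
\end{equation*}
The key simplification is the cyclotomic identity $\prod_{k=0}^{N-1}(1-\zeta^k x) = 1-x^N$, which gives $\prod_{k=1}^{N-1}(1-\zeta^k q^n) = (1-q^{nN})/(1-q^n)$. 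Substituting and collecting the exponents of $(1-q^n)$ turns the product into
\begin{equation*}
q^{-(N^2-1)/24}\prod_{n\geq 1}(1-q^n)\,(1-q^{nN})^{-N},
\end{equation*}
and comparing with $\eta(\tau) = q^{1/24}\prod_{n\geq 1}(1-q^n)$ and $\eta(N\tau) = q^{N/24}\prod_{n\geq 1}(1-q^{nN})$ identifies this with $\eta(\tau)/\eta(N\tau)^N$, as desired.

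For the remaining cases $N=2,3$, I would run the same calculation with the modified characteristic polynomials $(x^2-1)^{16}/(x-1)^8 = (x-1)^8(x+1)^{16}$ and $(x^3-1)^9/(x-1)^3 = (x-1)^6(x^2+x+1)^9$. The eigenvalue $1$ appears with multiplicity $8$ (resp.~$6$), and the nontrivial $N$th roots of unity each appear with multiplicity $16$ (resp.~$9$); the cyclotomic identity again consolidates the product and yields $\eta^8(\tau)/\eta^{16}(2\tau)$ and $\eta^3(\tau)/\eta^{9}(3\tau)$ respectively. None of these computations is deep — the only real obstacle is bookkeeping of the eigenvalue multiplicities and verifying that the $q$-power prefactor $q^{-d/24}$ combines correctly with the $q^{1/24}$ factors in the $\eta$-functions. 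Once that bookkeeping is in place, the three cases dispatch uniformly via the cyclotomic identity.
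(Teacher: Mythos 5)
Your proposal is correct: the Fock-space character formula, the eigenvalue multiplicities read off from $\mathrm{char}_\gamma(x)=(x^N-1)^N/(x-1)$, and the cyclotomic identity $\prod_{k=0}^{N-1}(1-\zeta^k x)=1-x^N$ combine exactly as you describe to give $\eta(\tau)/\eta^N(N\tau)$, and the $q$-prefactors match since $c_1=\dim\mathfrak{h}=N^2-1$. The paper states this lemma without proof, treating it as a standard Heisenberg character computation, and your argument is precisely the intended one (your remarks on $N=2,3$ are consistent with the paper's separate treatment of those cases, though the lemma itself is only asserted for $N>3$).
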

 
\noindent Since we instead need the derivatives of those functions, we can take the traces as follows:
\begin{Lemma}\label{tr2} For $N>3$, we have \begin{align*}
\emph{tr}\left(\left(L_1(0)-\frac{c_1}{24}\right)q^{L_1(0)-\frac{c_1}{24}}\mid V^{}\right)&=D\left(\dfrac{1}{\eta^{(N^2-1)}(\tau)}\right) \\ 
 \emph{tr}\left(\left(L_1(0)-\frac{c_1}{24}\right)\gamma q^{L_1(0)-\frac{c_1}{24}}\mid V^{}\right)&= D\left(\dfrac{\eta(\tau)}{\eta^N(N\tau)}\right). 
 \end{align*}
 \end{Lemma}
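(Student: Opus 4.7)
The claim is that applying $D = q \frac{d}{dq}$ to the generating functions already identified in Lemma \ref{tr1} amounts to inserting the grading operator $L_1(0) - c_1/24$ into the trace. The plan is therefore a direct computation: first observe that $L_1(0)$ acts diagonalizably on $V$ with finite-dimensional eigenspaces (indexed by the weight grading of the Heisenberg vertex algebra), so that we may write
\[
V = \bigoplus_{n \ge 0} V_n, \qquad L_1(0)\big|_{V_n} = n \cdot \mathrm{id}.
\]
On each graded piece, the operator $q^{L_1(0) - c_1/24}$ acts as multiplication by the scalar $q^{n - c_1/24}$, and
\[
D\bigl( q^{n - c_1/24} \bigr) = \bigl( n - c_1/24 \bigr) q^{n - c_1/24}.
\]
Hence $D \circ q^{L_1(0) - c_1/24} = (L_1(0) - c_1/24)\, q^{L_1(0) - c_1/24}$ as operators on each $V_n$.

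Next I would justify interchanging $D$ with the (absolutely convergent) trace. Since $\gamma$ commutes with $L_1(0)$ (because $\gamma$ is an automorphism preserving the conformal grading) and preserves each $V_n$, the trace $\mathrm{tr}\bigl(\gamma\, q^{L_1(0) - c_1/24} \mid V\bigr)$ decomposes as $q^{-c_1/24}\sum_{n \ge 0} \mathrm{tr}(\gamma \mid V_n)\, q^n$, and analogously without $\gamma$. Both series converge absolutely for $|q| < 1$, so term-by-term differentiation is legitimate. Applying $D$ brings down the factor $(n - c_1/24)$ inside each term, which is exactly the eigenvalue of $L_1(0) - c_1/24$ on $V_n$. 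Therefore
\[
D\, \mathrm{tr}\bigl(\gamma\, q^{L_1(0) - c_1/24} \mid V\bigr) = \mathrm{tr}\bigl( (L_1(0) - c_1/24)\, \gamma\, q^{L_1(0) - c_1/24} \mid V\bigr),
\]
and likewise with $\gamma$ replaced by the identity.

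Finally I would conclude by substituting the explicit formulas from Lemma \ref{tr1}: the right-hand sides of the two identities in that lemma are $\frac{1}{\eta^{N^2-1}(\tau)}$ and $\frac{\eta(\tau)}{\eta^N(N\tau)}$, so applying $D$ to these two expressions produces exactly the functions $D\bigl(\eta^{-(N^2-1)}(\tau)\bigr)$ and $D\bigl(\eta(\tau)/\eta^N(N\tau)\bigr)$ asserted in the statement. There is no real obstacle here; the only point that needs care is the verification that $\gamma$ preserves the weight grading of $V$ (so that the trace decomposes componentwise and $D$ commutes with it), which follows at once because $\gamma$ is induced from a linear automorphism of $\mathfrak{h}$ and therefore commutes with the Virasoro action on the Heisenberg vertex algebra.
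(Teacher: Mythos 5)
Your proof is correct and matches what the paper does: the paper states this lemma without an explicit proof, treating it as an immediate consequence of Lemma \ref{tr1} via exactly the observation you make, namely that inserting $L_1(0)-\tfrac{c_1}{24}$ into the graded trace multiplies the $q^{n-c_1/24}$ term by its exponent, which is the effect of $D=q\frac{d}{dq}$. Your additional care about $\gamma$ preserving the grading and about term-by-term differentiation is a sound filling-in of details the paper leaves implicit.
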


\noindent Note that the two equations above correspond to the first factor of the $g=e$ and $g\neq e$ functions in \eqref{thateq1} and the analogous statements hold for $N=2,3$ (\cref{thateq2,thateq3}). 

Next, for the cases $N>3$, in order to recover the second factor in each of the functions in \cref{thateq1}, 
 we need a module with trace functions $\eta^{N^2}(\tau)$ and $\eta(N\tau)^{N}$, respectively. This can be done using a Clifford module vertex algebra. For this construction we follow Duncan and Harvey \cite{DH}. In this setting, let $\mathfrak{p}$ be a one dimensional complex vector space with a symmetric bilinear form. Let $a(r):= a\otimes t^r$ for $a \in \mathfrak{p}$. Let ${\hat{\mathfrak{p}}}_{}=\mathfrak{p}[t,t^{-1}]t^{1/2}$ and $\hat{\mathfrak{p}}_{\text{tw}}=\mathfrak{p}[t,t^{-1}]$ with the bilinear form extended so that $\langle a(r), b(s) \rangle=\langle a,b \rangle \delta_{r+s,0}$.\\
 
 We define $\text{Cliff}(\mathfrak{p})$ to be the Clifford algebra attached to $\mathfrak{p}$. Let $\hat{\mathfrak{p}}^+:=\mathfrak{p}[t]t^{1/2}$, where $\langle \hat{\mathfrak{p}}^+ \rangle$ is a subalgebra of the Clifford algebra and let $\mathbb{C}v$ be a $\langle \hat{\mathfrak{p}} ^+\rangle$ module such that that $1v=v$ and $p(r)v=0$ for $r>0$.
 Then we define
 \[A(\mathfrak{p})_{}^{}:=\text{Cliff}(\hat{\mathfrak{p}} )\otimes_{\langle \hat{\mathfrak{p}}^{+}\rangle} \mathbb{C}v,\]
 and $A(\mathfrak{p})$ has the structure of a super vertex operator algebra with Virasoro element
 \[ \omega:=
  \mathfrak{p}(-3/2)\mathfrak{p}(-1/2)v. \] 
 
Let $\text{Cliff}(\hat{\mathfrak{p}}_{\text{tw}})$ be the Clifford algebra attached to $\hat{\mathfrak{p}}_{\text{tw}}$, let $v_{\text{tw}}$ be such that $1v_{\text{tw}}=v_{\text{tw}}$, and let $a(r)v_{\text{tw}}=0$ for $a\in \mathfrak{p}$ and $r>0$. Then take $p\in \mathfrak{p}$ such that $\langle p,p \rangle=-2$ and $p(0)^2=1$. Define $v_{\text{tw}}^{+}:=(1+p(0))v_{\text{tw}}$ so that $p(0)v_{\text{tw}}^{+}=v_{\text{tw}}^{+}$ and let $\hat{\mathfrak{p}}_{\text{tw}}^{>}:=\mathfrak{p}[t]t$.  Then we define  \[A(\mathfrak{p})_{\text{tw}}^{+}:=\text{Cliff}(\hat{\mathfrak{p}}_{\text{tw}} )\otimes_{\langle \hat{\mathfrak{p}}_{\text{tw}}^{>}\rangle} \mathbb{C}v_{\text{tw}}^{+},\] 

\noindent so that $A(\mathfrak{p})_{\text{tw}}^+$ is isomorphic to $\bigwedge(p(-n)\mid n>0)v_{\text{tw}}^+$ (where $\bigwedge(x_1, x_2 \dots):= \bigwedge(\oplus_{i=1}^{\infty} \mathbb{C}x_i)$).

By the reconstruction theorem described in \cite{FrenkelBenzvi} we can see that $A(\mathfrak{p})_{\text{tw}}$ is a twisted module for $A(\mathfrak{p})$ with fields $Y_{\text{tw}}: A(\mathfrak{p}) \otimes A(\mathfrak{p})_{\text{tw}} \to A(\mathfrak{p})_{\text{tw}}(\!(z^{1/2})\!)$ where \[ Y_{\text{tw}}(u(-1/2)v,z)=\sum\limits_{n \in \mathbb{Z}} u(n)z^{-n-1/2}\] with $u\in \mathfrak{p}$.  Since $A(\mathfrak{p})_{\text{tw}}^+$ is a submodule of $A(\mathfrak{p})_{\text{tw}}$ (generated by $v^+_{\text{tw}}$), it can be verified that $A(\mathfrak{p})_{\text{tw}}^+$ is a twisted module for $A(\mathfrak{p})$ so that the above map can be restricted to $A(\mathfrak{p})^+_{\text{tw}}$.

Let $L_2(0)$ be the $L(0)$ operator for the Clifford module vertex algebra and $c_2$ its central charge. Then we can see that $\text{tr}\left(p(0)q^{L_2(0)-\frac{c_2}{24}}\mid A(\mathfrak{p})_{\text{tw}}^+\right)= \eta(\tau)$.  We would like a module with graded dimension equal to $\eta(\tau)^{N^2}$ so we will consider a tensor product of these $ A(\mathfrak{p})_{\text{tw}}^+$ (we have from \cite{FLH} that the tensor product of vertex algebras is naturally a vertex algebra).\\

 To do this, we define 
\[ \widetilde{A}({\mathfrak{p}})^{}:= {A}({\mathfrak{p}}_1) \otimes \cdots \otimes {A}(\mathfrak{p}_{N^2})  \]
and
\[ \widetilde{A}({\mathfrak{p}})^{}_{\text{tw}}:= {A}({\mathfrak{p}_{1} })_{\text{tw}}^++\otimes \cdots \otimes {A}(\mathfrak{p}_{N^2} )_{\text{tw}}^+  \]
where each $A(\mathfrak{p}_i)_{\text{tw}}^+$ is isomorphic to $\bigwedge(p_i(-n) \mid n>0)v_{\text{tw}}^+$. Then we can define

\[\widetilde{Y}_{\text{tw}}: \widetilde{A}(\mathfrak{p})^{} \otimes \widetilde{A}(\mathfrak{p})^{+}_{\text{tw}} \to \widetilde{A}(\mathfrak{p})^{+}_{\text{tw}}(\!(z^{1/2})\!)\] where \begin{align*} \widetilde{Y}_{\text{tw}}((u(-1/2)v_1\otimes \cdots \otimes u(-1/2)v_{N^2})v,z)&=Y_1(u_1(-1/2)v_1,z) \otimes \cdots \otimes Y_{N^2}(u_{N^2}(-1/2)v_{N^2},z)\\&=\sum\limits_{n \in \mathbb{Z}^{N^2}} u_1(n_1)\otimes \cdots \otimes u_{N^2}(n_{N^2})z^{-n_1 \cdots -n_{N^2} -\frac{N^2}{2}}, \end{align*}
with $n^{}=(n_1, \dots, n_{N^2})$,
and finally  \[ \widetilde{p}^{}(0):=p_1(0)\otimes \cdots \otimes p_{N^2}(0). \]

Let $\sigma^{}_{}$ act on $\widetilde{A}({\mathfrak{p}})^{}_{\text{tw}}$ by permuting tensor factors with cycle shape $N^N$.
\noindent Now we have in the next lemma the second factor of each equation in \eqref{thateq1}.
\begin{Lemma}\label{clifflem}  \begin{align*}
\emph{tr}\left(\widetilde{p}^{}(0)q^{L_2(0)-\frac{c_2}{24}}\mid \widetilde{A}(\mathfrak{p})^{}_{\emph{tw}}\right)&= \eta^{N^2}(\tau),\\
\emph{tr}\left(\sigma^{}_{}\widetilde{p}^{}(0)q^{L_2(0)-\frac{c_2}{24}}\mid \widetilde{A}(\mathfrak{p})^{}_{\emph{tw}}\right)&= \eta^N(N\tau).
\end{align*}
\end{Lemma}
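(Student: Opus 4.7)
The plan is to reduce both identities to the single-factor trace
\[ \mathrm{tr}\bigl(p(0)\, q^{L(0)-c/24} \mid A(\mathfrak{p})^+_{\mathrm{tw}}\bigr) = \eta(\tau), \]
asserted for the Clifford module vertex algebra just above the lemma, and then to exploit the interaction between traces, tensor products, and cyclic permutations.

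For the first equality, since the $L(0)$-operator on the tensor product decomposes as a sum of single-factor $L(0)$'s (and similarly the central charge is additive), the operator $\widetilde{p}(0)\, q^{L_2(0)-c_2/24}$ is itself a pure tensor of the single-factor operators $p_i(0)\, q^{L^{(i)}(0) - c/24}$ across the $N^2$ Clifford pieces. Multiplicativity of trace over tensor products then yields the $N^2$-th power of the single-factor identity, giving $\eta(\tau)^{N^2}$.

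For the second equality I would decompose $\widetilde{A}(\mathfrak{p})_{\mathrm{tw}}$ according to the cycle structure of $\sigma$. Because $\sigma$ has cycle shape $N^N$, I can regroup the $N^2$ tensor factors into $N$ blocks of $N$ and write $\widetilde{A}(\mathfrak{p})_{\mathrm{tw}} = W_1 \otimes \cdots \otimes W_N$, with each $W_j \cong A(\mathfrak{p})^{+\otimes N}_{\mathrm{tw}}$ and $\sigma$ restricting to a single cyclic shift $\sigma_j$ on each $W_j$. Both $\widetilde{p}(0)$ and $q^{L_2(0)-c_2/24}$ respect this decomposition, so the trace factors across $j$ as a product of terms $\mathrm{tr}(\sigma_j A^{\otimes N} \mid W_j)$ with $A := p(0)\, q^{L(0)-c/24}$. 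The standard cyclic trace identity $\mathrm{tr}(\sigma_j A^{\otimes N} \mid V^{\otimes N}) = \mathrm{tr}(A^N \mid V)$ collapses each such term to $\mathrm{tr}\bigl(p(0)^N q^{N(L(0)-c/24)} \mid A(\mathfrak{p})^+_{\mathrm{tw}}\bigr)$. For $N$ an odd prime, the relation $p(0)^2 = 1$ gives $p(0)^N = p(0)$, and the substitution $q \mapsto q^N$ applied to the single-factor $\eta$-trace identifies this with $\eta(N\tau)$; multiplying across the $N$ cycles then produces $\eta(N\tau)^N$.

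The most delicate step will be verifying the cyclic trace identity in the Clifford/fermionic setting, where super-permutations of tensor factors can in principle introduce Koszul signs; I expect it to go through without extra signs because both $p(0)$ and the Virasoro zero-mode are even and the character here is an ordinary rather than super trace, but this warrants explicit justification. A secondary subtlety, if the lemma is intended to cover $N = 2$, is that $p(0)^N = 1$ rather than $p(0)$ in that case, so one would instead verify directly from the explicit product form of the graded dimension of $A(\mathfrak{p})^+_{\mathrm{tw}}$ that the resulting $\mathrm{tr}\bigl(q^{2(L(0)-c/24)} \mid A(\mathfrak{p})^+_{\mathrm{tw}}\bigr)$ reproduces $\eta(2\tau)^2$.
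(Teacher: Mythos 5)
Your proof is correct for the lemma as stated (which, per the surrounding text, concerns $N>3$), and it follows the route the paper leaves implicit: the paper gives no written proof beyond asserting the single-factor identity $\mathrm{tr}\bigl(p(0)q^{L_2(0)-c_2/24}\mid A(\mathfrak{p})^+_{\mathrm{tw}}\bigr)=\eta(\tau)$ and tensoring, so your multiplicativity-of-trace argument for the first identity and the cycle-by-cycle collapse $\mathrm{tr}(\sigma_j A^{\otimes N})=\mathrm{tr}(A^N)$ with $p(0)^N=p(0)$ for odd $N$ for the second is exactly the intended argument, and your flagging of the Koszul-sign issue is the right thing to single out as the delicate point. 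One caution about your closing aside: for $N=2$ the unsigned computation $\mathrm{tr}\bigl(q^{2(L(0)-c/24)}\mid A(\mathfrak{p})^+_{\mathrm{tw}}\bigr)$ gives $q^{1/12}\prod_{n>0}(1+q^{2n})=\eta(4\tau)/\eta(2\tau)$ per $2$-cycle rather than the $\eta(2\tau)$ needed for the paper's separate $N=2$ statement, so that case genuinely requires the super-permutation sign (which effectively reinstates a $(-1)^F=p(0)$ insertion) rather than a direct verification of the formula you wrote; this does not affect the lemma itself, which excludes $N=2$.
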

\vspace{3mm}
When $N=2,3$, the construction is similar, but we define $\widetilde{A}({\mathfrak{p}})^{}$, $\widetilde{A}({\mathfrak{p}})^{}_{\text{tw}}$, and $\widetilde{p}^{}(0)$ to have $32$ (resp. $27$) tensor factors (instead of $N^2$) and we take instead permutations $\sigma^{}_{}$ with cycle shape $2^{16}$ (resp. $\sigma^{}_{}$ with cycle shape $3^{9}$).\\

\noindent Then for $N=2$, with $\widetilde{A}({\mathfrak{p}})^{}_{\text{tw}}:= {A}({\mathfrak{p}_{1} })_{\text{tw}}^+\otimes \cdots \otimes {A}(\mathfrak{p}_{32} )_{\text{tw}}^+$, we have:
\begin{align*}
\text{tr}\left(\widetilde{p}^{}(0)q^{L_2(0)-\frac{c_2}{24}}\mid \widetilde{A}(\mathfrak{p})^{}_{\text{tw}}\right)&= \eta^{32}(\tau),\\
\text{tr}\left(\sigma^{}_{}\widetilde{p}^{}(0)q^{L_2(0)-\frac{c_2}{24}}\mid \widetilde{A}(\mathfrak{p})^{}_{\text{tw}}\right)&= \eta^{16}(2\tau).
\end{align*}
\noindent And for $N=3$ and $\widetilde{A}({\mathfrak{p}})^{}_{\text{tw}}:= {A}({\mathfrak{p}_{1} })_{\text{tw}}^+\otimes \cdots \otimes {A}(\mathfrak{p}_{27} )_{\text{tw}}^+$, we have:
\begin{align*}
\text{tr}\left(\widetilde{p}^{}(0)q^{L_2(0)-\frac{c_2}{24}}\mid \widetilde{A}(\mathfrak{p})^{}_{\text{tw}}\right)&= \eta^{27}(\tau),\\
\text{tr}\left(\sigma^{}_{}\widetilde{p}^{}(0)q^{L_2(0)-\frac{c_2}{24}}\mid \widetilde{A}(\mathfrak{p})^{}_{\text{tw}}\right)&= \eta^{9}(3\tau).
\end{align*}


 Lastly, we recover the third factor in \cref{thateq1,thateq2,thateq3}.
For this we define $\mathfrak{k}$ to be $\mathbb{C}$ when $N>3$, $\mathbb{C}^8$ when $N=2$, and $\mathbb{C}^3$ when $N=3$.
 Let $U:=S(k(-n) \mid n>0)$ (suitably modified when $N=2,3$) be the Heisenberg vertex algebra for $\mathfrak{k}$. We denote the $L(0)$ operator for this Heisenberg vertex algebra by $L_3(0)$ and let $c_3$ be its central charge. 
Then we have the following lemma

\begin{Lemma}\label{heis2lem} When $N>3$, we have 
\[\emph{tr}\left(q^{L_3(0)-\frac{c_3}{24}}\mid U\right)=\dfrac{1}{\eta(\tau)}.\]
\end{Lemma}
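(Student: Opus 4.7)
The plan is to invoke the standard graded-dimension computation for a rank-one Heisenberg Fock module. When $N>3$, the space $\mathfrak{k}=\mathbb{C}$ is one-dimensional, so $U=S(k(-n)\mid n>0)$ is the symmetric algebra on countably many generators indexed by $n\in\mathbb{Z}_{>0}$. With $k$ normalized by $\langle k,k\rangle=1$, the Virasoro element $\omega=\tfrac{1}{2}k(-1)^2\mathbf{1}$ endows $U$ with a Virasoro action of central charge $c_3=\dim\mathfrak{k}=1$, under which $L_3(0)$ acts on the monomial $k(-n_1)\cdots k(-n_r)\mathbf{1}$ by the scalar $n_1+\cdots+n_r$. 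This is precisely the setup reviewed in \cite{FrenkelBenzvi}.

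Next I would enumerate a PBW-style basis and sum. Taking monomials $k(-n_1)\cdots k(-n_r)$ with $n_1\geq n_2\geq\cdots\geq n_r\geq 1$ as a basis of $U$, the weight-$n$ subspace has dimension equal to the partition number $p(n)$, whence
\[
\text{tr}\bigl(q^{L_3(0)}\mid U\bigr)=\sum_{n\geq 0}p(n)q^n=\prod_{n\geq 1}\frac{1}{1-q^n}.
\]
Multiplying by $q^{-c_3/24}=q^{-1/24}$ and comparing with the product formula $\eta(\tau)=q^{1/24}\prod_{n\geq 1}(1-q^n)$ yields the claimed identity $\text{tr}(q^{L_3(0)-c_3/24}\mid U)=1/\eta(\tau)$.

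There is no genuine obstacle: the statement is the rank-one specialization of the general identity $\text{tr}(q^{L(0)-c/24}\mid S(\widehat{\mathfrak{h}}^{-}))=\eta(\tau)^{-\dim\mathfrak{h}}$ for a Heisenberg vertex algebra. The only point requiring mild care is matching the normalization conventions, in particular confirming that with the chosen Virasoro element one has $L_3(0)\mathbf{1}=0$ and central charge equal to $\dim\mathfrak{k}$. For the cases $N=2,3$ not covered by the stated lemma, the same argument applied to $\mathfrak{k}=\mathbb{C}^8$ or $\mathbb{C}^3$ produces $1/\eta^8(\tau)$ or $1/\eta^3(\tau)$ respectively, supplying exactly the third factor needed in \eqref{thateq2} and \eqref{thateq3}.
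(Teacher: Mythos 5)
Your proof is correct and is exactly the standard argument the paper implicitly relies on: the paper states this lemma without proof, treating it as the well-known rank-one Heisenberg graded-dimension identity, and your PBW/partition-counting computation with $c_3=\dim\mathfrak{k}=1$ supplies precisely that. No discrepancies.
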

\noindent And note that for $N=2,3$ we get graded dimension equal to $\dfrac{1}{\eta^8(\tau)}$ and $\dfrac{1}{\eta^3(\tau)}$, respectively.

To get a vertex algebra whose trace function is the desired product in \cref{thateq1,thateq2,thateq3}, we let \[W^{(N)}:=V^{} \otimes \widetilde{A}(\mathfrak{p})^{}_{} \otimes U.\]
We take the following canonically twisted module for the vertex algebra $W^{(N)}$:
\[W^{(N)}_{\text{tw}}:=V^{} \otimes \widetilde{A}(\mathfrak{p})_{\text{tw}}^{} \otimes U.\]

The actions of $\gamma$ and $\sigma$ naturally extend to $W^{(N)}$ and $W^{(N)}_{\text{tw}}$, by letting them act trivially on the factors where they have not already been defined (i.e. $\gamma$ acts as $\gamma\otimes \text{id}\otimes \text{id}$ and $\sigma$ acts as $\text{id}\otimes \sigma\otimes \text{id}$). Note that with this definition both $\gamma$ and $\sigma$ are automorphisms of $W^{(N)}$, and act equivariantly on the twisted module in the sense that we have $Y_{\text{tw}}(gu,z)gv=gY_{\text{tw}}(u,z)v$ for $u\in W^{(N)}$ and $v\in W^{(N)}_{\text{tw}}$, and $g$ equal to $\gamma$ or $\sigma$.

Lastly, we define the operator $L(0):=L_1(0)+L_2(0)+L_3(0)$ and the central charge $c:=c_1+c_2+c_3$ where the subscripts indicate the operators and central charges for the Heisenberg and Clifford vertex algebras described above.
Then we have that the following forms are equal to the forms in \eqref{thateq1} when $N>3$, \eqref{thateq2} when $N=2$, and \eqref{thateq3} when $N=3$:
\[ \begin{dcases} \text{tr}\left(\widetilde{p}^{}(0)\left(L_1(0)-\frac{c_1}{24}\right)q^{L(0)-\frac{c}{24}}\mid W^{(N)}_{\text{tw}}\right)  & \text{if } g=e, \\
  \text{tr}\left(\gamma^{} \sigma^{}\widetilde{p}^{}(0)\left(L_1(0)-\frac{c_1}{24}\right)q^{L(0)-\frac{c}{24}}\mid W^{(N)}_{\text{tw}}\right)   & \text{if } g\neq e.
 \end{dcases}\]

 Thus we have constructed a vertex algebra for the $\mathbb{Z}/ N \mathbb{Z}$-module with purely Eisenstein quasimodular trace functions.
\begin{Theorem} Let $N$ be prime. Then $W^{(N)}_{\emph{tw}}=\bigoplus\limits_n W^{(N)}_{\emph{tw},n}$ is an infinite dimensional virtual graded module for $\mathbb{Z}/ N \mathbb{Z}$ such that 
\[F_{g}^{(N)}(\tau)=\sum\limits_{n=0}^{\infty} \emph{tr}(g \mid W^{(N)}_{\emph{tw},n})q^n.\]
\end{Theorem}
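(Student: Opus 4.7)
The plan is to compute the graded trace on the right-hand side of the formula at the end of Section \ref{Section5} by exploiting the tensor product decomposition $W^{(N)}_{\text{tw}} = V \otimes \widetilde{A}(\mathfrak{p})_{\text{tw}} \otimes U$ and matching the resulting product to the eta-quotient expressions in \eqref{thateq1}, \eqref{thateq2}, \eqref{thateq3}, which the paper has already identified with $F_g^{(N)}(\tau)$ at the start of the section.

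First I would check that the actions of $\gamma$ and $\sigma$ (extended trivially to the other tensor factors) commute, and that $\gamma\sigma$ is an automorphism of order $N$ of $W^{(N)}_{\text{tw}}$, so the cyclic group it generates realizes a $\mathbb{Z}/N\mathbb{Z}$-action. Since $N$ is prime, each non-identity power $(\gamma\sigma)^k$ has $\gamma^k$ again satisfying the same characteristic polynomial structure on $\mathfrak{h}$, and $\sigma^k$ again has cycle shape $N^N$ on the Clifford tensor factors, so the relevant Heisenberg and Clifford traces for $(\gamma\sigma)^k$ coincide with those for $\gamma\sigma$. This justifies the $g=e$ versus $g\neq e$ dichotomy built into $F_g^{(N)}$.

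Next, I would observe that $L_1(0)$ and $\gamma$ act only on $V$, that $\widetilde{p}(0)$ and $\sigma$ act only on $\widetilde{A}(\mathfrak{p})_{\text{tw}}$, and that $L_3(0)$ acts only on $U$, so the trace splits as a product of three traces. For $g=e$, Lemmas \ref{tr2}, \ref{clifflem}, and \ref{heis2lem} combine to give
\[
\text{tr}\!\left(\widetilde{p}(0)\bigl(L_1(0)-\tfrac{c_1}{24}\bigr)q^{L(0)-c/24}\bigm| W^{(N)}_{\text{tw}}\right) = D\!\left(\tfrac{1}{\eta^{N^2-1}(\tau)}\right)\eta^{N^2}(\tau)\tfrac{1}{\eta(\tau)},
\]
and the identity $q\frac{d}{dq}\log(1/\eta^{N^2-1}) = -\ell_N E_2(\tau)$ from the beginning of Section \ref{Section5} identifies this with $F_e^{(N)}(\tau) = -\ell_N E_2(\tau)$. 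For $g\neq e$, inserting $\gamma$ and $\sigma$ and applying the same three lemmas yields $D(\eta/\eta^N(N\tau))\cdot\eta^N(N\tau)\cdot\frac{1}{\eta(\tau)}$, which by the second log-derivative identity equals $-\ell_N E_{2,N}(\tau) = F_g^{(N)}(\tau)$. The cases $N=2,3$ are formally identical, using the modified tensor counts and exponents stated in the setup.

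Finally, to justify the virtual graded $\mathbb{Z}/N\mathbb{Z}$-module structure, I would note that $\widetilde{p}(0)$ has eigenvalues $\pm 1$, so each graded piece decomposes as $W^{(N)}_{\text{tw},n} = W^{(N),+}_{\text{tw},n}\oplus W^{(N),-}_{\text{tw},n}$ and the $q^n$-coefficient of $F_g^{(N)}$ equals the virtual character $\text{tr}(g|W^{(N),+}_{\text{tw},n}) - \text{tr}(g|W^{(N),-}_{\text{tw},n})$; both $\gamma$ and $\sigma$ preserve this $\mathbb{Z}/2\mathbb{Z}$-grading because they commute with $\widetilde{p}(0)$. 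Integrality of the multiplicities of the $N$ irreducible $\mathbb{Z}/N\mathbb{Z}$-characters is handled exactly as in the proof of Theorem \ref{eiscuspmoonshine}: it reduces to the congruence $F_g^{(N)}(\tau)\equiv F_e^{(N)}(\tau)\pmod{N}$ for $g\neq e$, and this in turn follows from $\ell_N E_{2,N}(\tau) - \ell_N E_2(\tau) = \tfrac{N^2}{24}(E_2(N\tau)-E_2(\tau))$, whose coefficients lie in $N^2\mathbb{Z}$ since $\frac{E_2(N\tau)-E_2(\tau)}{24}$ has integer coefficients. The main obstacle I expect is bookkeeping: verifying that the three tensor factors really split the trace in the presence of the parity operator $\widetilde{p}(0)$ and the weight operator $L_1(0)-c_1/24$, and uniformly handling the $N=2,3$ exceptional exponents alongside the generic $N>3$ case; once this is clean, the rest is immediate from the lemmas.
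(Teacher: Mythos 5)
Your proposal is correct and follows essentially the same route as the paper: Section \ref{Section5} itself constitutes the proof, combining \Cref{tr2}, \Cref{clifflem}, and \Cref{heis2lem} with the tensor-factorization of the trace and the logarithmic-derivative identities to match \eqref{thateq1}--\eqref{thateq3}. Your additional verifications --- that $(\gamma\sigma)^k$ for $k\not\equiv 0 \pmod N$ yields the same traces, and that multiplicity integrality reduces to $F_g^{(N)}\equiv F_e^{(N)}\pmod N$ via the computation $\ell_N E_{2,N}(\tau)-\ell_N E_2(\tau)=\tfrac{N^2}{24}\left(E_2(N\tau)-E_2(\tau)\right)$ --- are steps the paper leaves implicit, and they check out.
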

\appendix
\section{Cusp forms}
We give the cusp forms relevant for $M_{23}$ explicitly below:
\begin{align*}
G_{11}(\tau)&=2\eta^2(\tau)\eta^2(11\tau), \\
G_{14}(\tau)&=\eta(\tau)\eta(2\tau)\eta(7\tau)\eta(14\tau),\\
G_{15}(\tau)&=\eta(\tau)\eta(3\tau)\eta(5\tau)\eta(15\tau),\\
G_{23a}(\tau)&=\dfrac{\eta^3(\tau)\eta^3(23\tau)}{\eta(2\tau)\eta(46\tau)}+3\eta^2(\tau)\eta^2(23\tau)+4\eta(\tau)\eta(2\tau)\eta(23\tau)\eta(46\tau)+4\eta^2(2\tau)\eta^2(46\tau), \\
G_{23b}(\tau)&=\eta^2(\tau)\eta^2(23\tau), \\
G_{23}(\tau)&=G_{23a}(\tau)+3G_{23b}(\tau).
\end{align*}

\bibliography{QMbibtex}{}
\bibliographystyle{plain}

\end{document}